\newtheorem{lemma}{Lemma}
\newtheorem{remark}{Remark}
\newtheorem{definition}{Definition}
\newtheorem{proposition}{Proposition}
\newcommand{\argmin}{\operatornamewithlimits{argmin}}
\begin{document}

\title{Hierarchical Low-Rank Approximation of Regularized Wasserstein Distance}

\author[1]{Mohammad Motamed\thanks{motamed@math.unm.edu}}

\affil[1]{Department of Mathematics and Statistics, The University of New Mexico, Albuquerque, USA}
\maketitle

\begin{abstract}

Sinkhorn divergence is a measure of dissimilarity between two
probability measures. It is obtained through adding
an entropic regularization term to Kantorovich's optimal transport
problem and can hence be viewed as an entropically regularized Wasserstein distance. 
Given two discrete probability vectors in the $n$-simplex and 
supported on two bounded subsets of ${\mathbb R}^d$, 
we present a fast method for computing Sinkhorn
divergence when the cost matrix can be decomposed into a $d$-term sum
of asymptotically smooth Kronecker product factors. 
%
%
%
The method combines Sinkhorn's matrix
scaling iteration with a low-rank hierarchical representation of the
scaling matrices to achieve a near-linear complexity ${\mathcal O}(n
\log^3 n)$.  
This provides a fast and easy-to-implement algorithm for computing
Sinkhorn divergence, enabling its applicability to large-scale optimization problems, where the computation of classical
Wasserstein metric is not feasible. 
We present a numerical example related to signal processing to demonstrate the applicability of
quadratic Sinkhorn divergence in comparison with quadratic
Wasserstein distance and to verify the accuracy and
efficiency of the proposed method. 
\end{abstract}

{\bf keywords} 
optimal transport, Wasserstein metric, Sinkhorn divergence,
hierarchical matrices


\section{Introduction}
\label{sec:intro}

Wasserstein distance is a measure of dissimilarity between two probability
measures, arising from optimal transport; see e.g. \cite{Villani:03,Villani:09}. Due to its desirable convexity and convergence features, Wasserstein
distance is considered as an important measure of dissimilarity
in many applications, ranging from computer vision and machine learning to Seismic and
Bayesian inversion; 
see e.g. \cite{Peyre_Cuturi:19,engquist2016optimal,Wasserstein_Bjorn,Motamed_Appelo:19}.
The application of Wasserstein metric may however be
limited to cases where the probability measures are supported on
low-dimensional spaces, because its numerical computation can quickly become prohibitive
as the dimension increases; see e.g. \cite{Peyre_Cuturi:19}.

Sinkhorn divergence \cite{Cuturi:13} is another measure of dissimilarity related to Wasserstein distance. It is obtained through adding an entropic regularization
term to the Kantorovich formulation of optimal transport problem. It
can hence be viewed as a regularized Wasserstein distance with
desirable convexity and regularity properties. 
A main advantage of Sinkhorn divergence over Wasserstein
distance lies in its computability by an iterative algorithm
known as Sinkhorn's matrix scaling algorithm \cite{Sinkhorn:64}, where
each iteration involves two matrix-vector products. 
Consequently, Sinkhorn divergence may serve as a feasible alternative to classical
Waaserstein distance, particularly when probability measures are
supported on multi-dimensional spaces in ${\mathbb R}^d$ with $d \ge 2$.

Given a cost function and two $n$-dimensional discrete probability
vectors 
supported on two measurable subsets of the Euclidean space ${\mathbb R}^d$,
it is known that Sinkhorn’s algorithm can compute an $\varepsilon$-approximation of
the original Kantorovich optimal transport problem within ${\mathcal O}(\log n
\, \varepsilon^{-2})$ iterations \cite{Dvurechensky_etal:18}. If the matrix-vector products needed in each
iteration of the algorithm are performed with cost ${\mathcal O}(n^2)$, the overall complexity
of the algorithm for computing Sinkhorn divergence will be ${\mathcal O}(n^2 \, \log n
\, \varepsilon^{-2})$. While theoretically attainable, this quadratic
cost may still prohibit the application of Sinkhorn divergence to
large-scale optimization problems that require many evaluations of
Sinkhorn-driven loss functions.

In the present work, we will develop a hierarchical low-rank strategy
that significantly reduces the quadratic cost of Sinkhorn's algorithm to a
near-linear complexity ${\mathcal O}(n \, \log^3 n \,
\varepsilon^{-2})$. 
We consider a class of cost matrices in optimal
transport that can be decomposed into a $d$-term sum of Kronecker product factors, where each term is asymptotically
smooth. 
Importantly, such class of cost matrices induce a
wide range of optimal transport distances, including the quadratic Wasserstein metric and its corresponding
Sinkhorn divergence. 
We then propose a strategy that takes two steps to reduce the
complexity of each Sinkhorn iteration from ${\mathcal O}(n^2)$ to
${\mathcal O}(n \, \log^2 n)$. 
%
%
We first exploit the block structure of Kronecker product matrices and
turn the original (and possibly high-dimensional) matrix-vector
product problems into several smaller one-dimensional problems. 
The smaller problems will then be computed by the hierarchical matrix technique
\cite{Hackbusch:15}, thanks to their asymptotic smoothness, with a
near-linear complexity. We further present a thorough error-complexity
analysis of the proposed hierarchical low-rank Sinkhorn’s algorithm.  


The rest of the paper is organized as follows. In Section \ref{sec:OT} we review
the basics of Wasserstein and Sinkhorn dissimilarity measures in
optimal transport and include a short discussion of the original Sinkhorn's algorithm. 
We then present and analyze the proposed hierarchical low-rank Sinkhorn’s algorithm for computing Sinkhorn
divergence in Section \ref{sec:ALG}. 
In Section \ref{sec:Numerics} we present a numerical example related
to a signal processing optimization
problem to demonstrate the applicability, accuracy, and
efficiency of the proposed method. 
Finally, in Section \ref{sec:CON} we summarize conclusions and outline future works.

\section{Optimal transport dissimilarity measures}
\label{sec:OT}

In this section, we present the basics of the Kantorovich
formulation of optimal transport and review two related dissimilarity measures; the Wasserstein distance and the Sinkhorn
divergence. We focus on probability vectors, 
rather than working with more general probability measures. 

\subsection{Kantorovich's optimal transport problem and Wasserstein distance}
\label{sec:Wass}

Let ${\mathcal X}$ and ${\mathcal Y}$ be two measurable subsets of the
Euclidean space ${\mathbb R}^d$, with $d \in {\mathbb N}$. 
Let further ${\bf f}$ and ${\bf g}$ be two $n$-dimensional probability
vectors, i.e. two vectors in the probability simplex  $\Sigma_n := \{ {\bf
  f} \in {\mathbb R}_+^n: \, \, {\bf f}^{\top} \mathds{1}_n = 1 \}$,
defined on ${\mathcal X}$ and ${\mathcal Y}$, respectively. Here,
$\mathds{1}_n$ is the $n$-dimensional vector of ones. 
The two probability vectors ${\bf f}$ and ${\bf g}$ are assumed to be
given at two sets of $n$ discrete points $\{ {\bf x}_1, \dotsc, {\bf
  x}_n \} \subset
{\mathcal X}$ and $\{ {\bf y}_1, \dotsc, {\bf y}_n \} \subset
{\mathcal Y}$ in ${\mathbb R}^d$, respectively. Figure
\ref{fig:p_vectors} shows a schematic representation of probability
vectors in $d=1$ and $d=2$ dimensions. 
\begin{figure}[!ht]
\begin{center}
\includegraphics[width=0.3\textwidth]{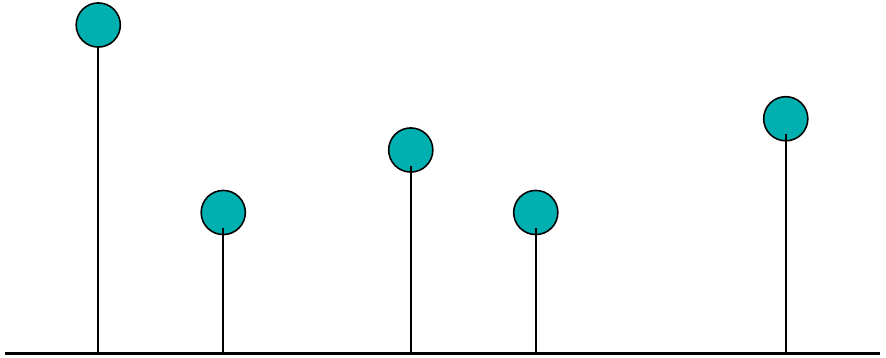}      
\hskip 1cm
\includegraphics[width=0.3\textwidth]{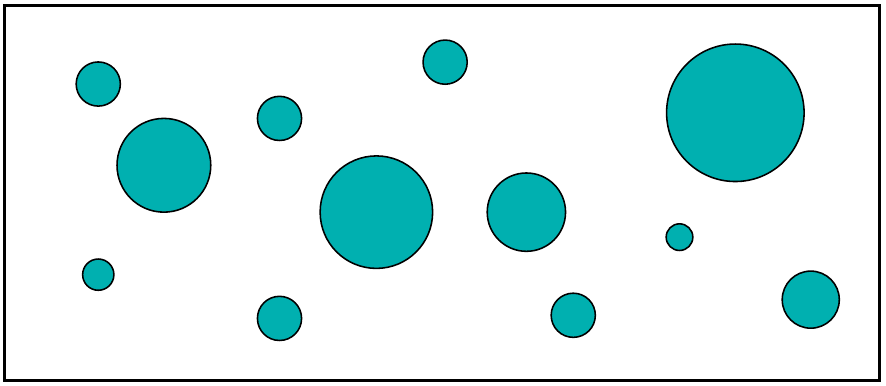}     
\caption{A schematic representation of a probability vector ${\bf f}
  \in \Sigma_n$ given at a set of $n$ discrete points $\{ {\bf x}_1, \dotsc, {\bf
  x}_n \} \subset
{\mathcal X} \subset {\mathbb R}^d$ in one ($d=1$) and two ($d=2$) dimensions. 
Left: a probability vector with $n=5$ components supported in
${\mathbb R}$ and represented by a
  lollipop chart. The hight of each bar is equal to a component ${\bf f}_i$, and the location of each bar corresponds
  to a point ${\bf x}_i \in {\mathcal X} \subset {\mathbb R}$. 
Right: a probability vector with $n=12$ components supported in
${\mathbb R}^2$ and represented by a
  bubble chart, where the area and the center of each bubble correspond to ${\bf
    f}_i$ and ${\bf x}_i
  \in {\mathcal X} \subset {\mathbb R}^2$, respectively.}  
\label{fig:p_vectors}
\end{center}
\end{figure}

We denote by $U({\bf f},{\bf g})$ the transport polytope of ${\bf f}$
and ${\bf g}$, i.e. the set of all nonnegative $n \times n$ matrices
with row and column sums equal to ${\bf f}$ and ${\bf g}$,
respectively, that is,
$$
U({\bf f},{\bf g}) := \{ P \in {\mathbb R}_+^{n \times n}: \, \, P
\mathds{1}_n = {\bf f}, \, P^{\top} \mathds{1}_n = {\bf g} \}.
$$
Each matrix $P = [P_{ij}] \in {\mathbb R}_+^{n \times n}$ in the transport polytope $U({\bf f},{\bf g})$ is a
transport (or coupling) matrix that encodes a transport plan, that is, 
each element $P_{ij}$ of $P$ describes the amount of mass to be
transported from a source point ${\bf x}_i \in {\mathcal X}$ to a
traget point ${\bf y}_j \in {\mathcal Y}$, with $i,j = 1, \dotsc,
n$. The two constraints $P \mathds{1}_n = {\bf f}$ and $P^{\top}
\mathds{1}_n = {\bf g}$ are necessary for a transport plan $P$ to be
admissible; all the mass taken from a point ${\bf x}_i$, i.e. $\sum_{j=1}^n
P_{ij}$, must be equal to the mass at point ${\bf x}_i$, i.e. ${\bf
  f}_i$, and all the mass transported to a point ${\bf y}_j$, i.e. $\sum_{i=1}^n
P_{ij}$, must be equal to the mass at point ${\bf y}_j$, i.e. ${\bf
  g}_j$. We say that ${\bf f}$ and ${\bf g}$ are the marginals of
$P$.

Let further $c: {\mathcal X} \times {\mathcal Y} \rightarrow {\mathbb
  R}_+$ be a non-negative cost function defined on ${\mathcal X}
\times {\mathcal Y}$. For any pair of points $({\bf x},{\bf y}) \in {\mathcal X} \times {\mathcal Y}$, the value $c({\bf x},{\bf y})$ represents the cost of transporting one
unit of mass from a source point ${\bf x} \in {\mathcal X}$ to a
target point ${\bf y} \in {\mathcal Y}$. 
Correspondingly, we consider the cost matrix 
$$
C = [C_{ij}] \in {\mathbb R}_+^{n \times n}, \qquad C_{ij} = c({\bf x}_i, {\bf
  y}_j), \qquad i,j = 1, \dotsc, n.
$$

The optimal transport problem can now be formulated as follows. We first
note that for any given transport plan $P \in U({\bf f},{\bf g})$, the total
transport cost associated with $P$ is given by the Frobenius inner
product $\langle P, C \rangle = \sum_{i,j} P_{ij} C_{ij}$. 
%
%
Kantorovich’s optimal transport problem then aims at minimizing the
total transport cost over all admissible transport plans. It reads
\begin{equation}\label{OT_Kantorovich}
T_{C}({\bf f}, {\bf g}) := \min_{P \in U({\bf f}, {\bf g})} \langle P, C \rangle,
\end{equation}
where $T_C({\bf f}, {\bf g})$ is the optimal total cost of transporting ${\bf f}$ onto
${\bf g}$.


In some applications the main goal of solving Kantorovich’s optimal
transport problem \eqref{OT_Kantorovich} is to find the optimal transport plan, i.e. the
transport matrix $P$ that minimizes the total cost. 
In some other applications the goal is to directly find the optimal cost $T_C$
without the need to explicitly form the optimal transport matrix. An
example of the latter application appears in the construction of loss
functions in optimization problems, where loss functions are built based on metrics induced by the optimal cost $T_C$. An important type of such metrics is Wasserstein metric. 
%

For the optimal cost $T_C$ in \eqref{OT_Kantorovich}
to induce a metric, we need the cost matrix
$C \in {\mathbb R}_+^{n \times n}$ to be a metric matrix belonging to the cone of distance matrices \cite{Villani:09}, i.e. we need $C \in
{\mathcal M}^{n \times n}$, where
$$
{\mathcal M}^{n \times n} := \{ C=[C_{ij}] \in {\mathbb R}_+^{n \times n}:
C_{ij} = 0  \Leftrightarrow i=j, \, \, C_{ij} \le C_{ik} + C_{kj} \}.
$$ 
Throughout this paper, we always assume that the cost matrix $C$ is a
metric matrix: $C \in {\mathcal M}^{n \times n}$. An important class of metric
matrices is driven by cost functions of the form $c({\bf x},{\bf y}) = d({\bf x}, {\bf
  y})^p$, where $p \in [1,\infty)$, and $d$ is a distance function (or
metric). 
In this case the cost matrix reads
\begin{equation}\label{C1}
C = [C_{ij}] \in {\mathbb R}_+^{n \times n}, \qquad C_{ij} = d({\bf
  x}_i, {\bf y}_j)^p, \qquad i,j = 1, \dotsc, n, \qquad p\in
[1,\infty). 
\end{equation}
%
%
%
%
%
%
Intuitively, this choice implies that the cost of transporting one
unit of mass from a source point ${\bf x} \in {\mathcal X}$ to a
target point ${\bf y} \in {\mathcal Y}$ corresponds to the distance
between the two points. 
The optimal cost $T_C({\bf f}, {\bf g})$ in \eqref{OT_Kantorovich}
corresponding to the cost matrix \eqref{C1} induces a metric known as the Wasserstein
distance of order $p$ \cite{Villani:03,Villani:09}, 
\begin{equation}\label{W}
W_p({\bf f}, {\bf g}) = \bigl( T_{C}({\bf f}, {\bf g}) \bigr)^{1/p}.
\end{equation}
%

Due to its desirable convexity and convergence features, Wasserstein
distance is considered as an important measure of dissimilarity
in many applications, ranging from computer vision and machine learning to Seismic and
Bayesian inversion; 
see e.g. \cite{Peyre_Cuturi:19,engquist2016optimal,Wasserstein_Bjorn,Motamed_Appelo:19}.
Unfortunately, the numerical computation of Wasserstein metric is in
general infeasible in high dimensions and is mainly limited to low dimensions; see e.g. \cite{Peyre_Cuturi:19}. 
For instance, in the case $c({\bf x}, {\bf y}) = || {\bf x}
- {\bf y} ||^2$, the quadratic Wasserstein distance between two
continuous probability density functions $f: {\mathcal X}\rightarrow
{\mathbb R}+$ and $g: {\mathcal Y}\rightarrow
{\mathbb R}+$ is given by \cite{Brenier:91}
$$
W_2^2(f, g) = \int_{\mathcal X} | {\bf x} - M({\bf x}) |^2 \, f(\bf x)
\, d{\bf x},
$$
where $M$ is the optimal map. In one dimension, i.e. when ${\mathcal X},
{\mathcal Y} \subset {\mathbb R}$, the optimal map is given by $M=G^{-1} \circ F$,
where $F$ and $G$ are the cumulative distribution functions
corresponding to $f$ and $g$, respectively. The discrete
$W_2$ metric between the corresponding probability vectors ${\bf f} = [f({\bf
  x}_i)]$ and ${\bf g} = [g({\bf y}_i)]$ can then be efficiently approximated by a quadrature rule, 
$$
W_2^2({\bf f}, {\bf g}) \approx \Delta {\bf x} \, \sum_{i=1}^n |{\bf x}_i - G^{-1} \circ F ({\bf x}_i)|^2 {\bf f}_i, \qquad F ({\bf x}_i) =
\sum_{j=1}^i {\bf f}_j,
$$
where $G^{-1}$ may be computed by interpolation and an efficient search
algorithm, such as binary search. The overall cost of computing the discrete
$W_2$ metric in one dimension will then be ${\mathcal O}(n \, \log n)$. 
In multiple dimensions, i.e. when ${\mathcal X}, {\mathcal Y} \subset
{\mathbb R}^d$ with $d \ge 2$, however, the computation of the optimal
map $M$, and hence the $W_2$ metric, may be
either expensive or infeasible. The map is indeed given by the gradient of a convex
function that is the solution to a constrained nonlinear
Monge-Amp{\`e}re equation. We refer to \cite{engquist2016optimal} for more discussion on
the finite difference approximation of the Monge-Amp{\`e}re equation
in two dimensions.

\subsection{Entropy regularized optimal transport and Sinkhorn divergence}
\label{sec:Sink}

One approach to computing Kantorovich's optimal transport
problem \eqref{OT_Kantorovich} is to regularize the original problem
and seek approximate solutions to the regularized problem. 
One possibility is to add an entropic penalty term to the total
transport cost and arrive at the regularized problem
\begin{equation}\label{OT_Regularized}
T_{C}^{\lambda}({\bf f}, {\bf g}) := \min_{P \in U({\bf f}, {\bf g})}
\langle P, C \rangle - \frac{1}{\lambda} H(P),
\end{equation}
where $\lambda >0$ is a regularization parameter, and $H(P)$ is the discrete entropy of transport matrix,
\begin{equation}\label{entropy}
H(P) = - \sum_{i,j} P_{ij} (\log P_{ij} - 1).
\end{equation}
The regularized problem \eqref{OT_Regularized}-\eqref{entropy} with $C
\in {\mathcal M}^{n \times n}$
has a
unique solution, say $P_{\lambda}$ \cite{Cuturi:13}. Indeed, the existence and uniqueness
follows from the boundedness of $U ({\bf f}, {\bf g})$ and the strict convexity of
the negative entropy. 
On the one hand, as $\lambda$ increases, the unique
solution $P_{\lambda}$ converges to the solution with maximum entropy within the set
of all optimal solutions of the original Kantorovich's problem; see Proposition 4.1 in \cite{Peyre_Cuturi:19},
$$
P_{\lambda} \stackrel{\lambda \rightarrow \infty}{\longrightarrow} \underset {P \in  \{ P \in U({\bf f}, {\bf g}), \,
\langle P, C \rangle = T_{C}({\bf f}, {\bf g})\}}{\mathrm{argmin}}  -H(P).
$$  
%
In particular, we have $T_{C}^{\lambda}({\bf f}, {\bf g})
\stackrel{\lambda \rightarrow \infty}{\longrightarrow} T_{C}({\bf f},
{\bf g})$. On the other hand, as $\lambda$ decreases, the unique solution
$P_{\lambda}$ converges to the transport matrix with maximum entropy
between the two marginals ${\bf f}$ and ${\bf g}$, i.e. the outer
product of ${\bf f}$ and ${\bf g}$; see Proposition 4.1 in \cite{Peyre_Cuturi:19},
%
%
$$
P_{\lambda}
\stackrel{\lambda \rightarrow 0}{\longrightarrow} \, {\bf f} \, {\bf
  g}^{\top} = ({\bf f}_i \, {\bf g}_j).
$$ 
It is to be noted that in
this latter case, as $\lambda \rightarrow 0$, the optimal solution $P_{\lambda}$ becomes less
and less sparse (or smoother), i.e. with more and more entries larger than a
prescribed small threshold. 

With $P_{\lambda}$ being the optimal solution to the regularized problem
\eqref{OT_Regularized}-\eqref{entropy} corresponding to the cost
matrix \eqref{C1}, the Sinkhorn divergence of
order $p$ between ${\bf f}$ and ${\bf g}$ is defined as
\begin{equation}\label{S}
S_{p,\lambda}({\bf f}, {\bf g}) = \langle P_{\lambda}, C \rangle^{1/p}.
\end{equation}

Sinkhorn divergence \eqref{S} has several important properties. First, it
satisfies all axioms for a metric with the exception of identity of
indiscernibles (or the coincidence axiom), that is, it satisfies
non-negativity, symmetry, and triangle inequality axioms
\cite{Cuturi:13}. 
Second, it is convex and smooth with respect to the input probability
vectors and can be differentiated using automatic differentiation \cite{Peyre_Cuturi:19}. 
Third, as discussed below, it can be computed by a simple alternating
minimization algorithm, known as Sinkhorn's algorithm, where each
iteration involves two matrix-vector products. 
Consequently, Sinkhorn divergence may serve as a feasible alternative
to classical Waaserstein distance, particularly when probability
measures are supported on multi-dimensional metric spaces. 
%
%
%
%
\begin{proposition} \label{Prop:S-W}
Sinkhorn divergence \eqref{S} is an upper
  bound for Wasserstein distance \eqref{W},
\begin{equation}\label{S-W}
S_{p,\lambda}({\bf f}, {\bf g}) \ge W_p({\bf f}, {\bf g}).
\end{equation}
\end{proposition}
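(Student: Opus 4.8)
The plan is to reduce the claimed inequality between the two (nonnegative) distances to an inequality between the underlying transport costs, and then to exploit that both quantities are built from the \emph{same} feasible set $U({\bf f},{\bf g})$. Since the map $t \mapsto t^{1/p}$ is strictly increasing on ${\mathbb R}_+$ for every $p \in [1,\infty)$, the inequality \eqref{S-W} is equivalent to $\langle P_\lambda, C \rangle \ge T_C({\bf f}, {\bf g})$, where $P_\lambda$ is the unique minimizer of the entropy-regularized problem \eqref{OT_Regularized}. Thus it suffices to compare the raw transport cost $\langle P_\lambda, C \rangle$ evaluated at the regularized optimal plan against the unregularized optimal cost $T_C$.

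The key observation I would use is that $P_\lambda$, although it minimizes the regularized objective $\langle P, C \rangle - \tfrac{1}{\lambda} H(P)$ rather than the bare cost, still lies in the transport polytope $U({\bf f},{\bf g})$; that is, it is a \emph{feasible} plan for Kantorovich's problem \eqref{OT_Kantorovich}. Consequently $P_\lambda$ is one of the candidates over which the minimum defining $T_C$ is taken, and by the very definition of the minimum in \eqref{OT_Kantorovich} we immediately get
\begin{equation*}
\langle P_\lambda, C \rangle \ge \min_{P \in U({\bf f}, {\bf g})} \langle P, C \rangle = T_C({\bf f}, {\bf g}).
\end{equation*}
Raising both sides to the power $1/p$ and using monotonicity then yields $\langle P_\lambda, C \rangle^{1/p} \ge \bigl(T_C({\bf f}, {\bf g})\bigr)^{1/p}$, which is precisely $S_{p,\lambda}({\bf f}, {\bf g}) \ge W_p({\bf f}, {\bf g})$ by \eqref{S} and \eqref{W}.

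Honestly, there is no substantial obstacle here: the result is essentially immediate once the correct reduction is in place. The only point demanding care—and the place where a careless argument could go wrong—is to keep clear the distinction between the regularized \emph{objective value} $T_C^\lambda({\bf f}, {\bf g})$ in \eqref{OT_Regularized} and the \emph{transport cost at the regularized plan} $\langle P_\lambda, C \rangle$ appearing in the definition \eqref{S} of Sinkhorn divergence. The inequality holds for the latter because $P_\lambda$ is feasible; it would be false in general to compare $T_C$ with $T_C^\lambda$, since the entropy penalty drives $T_C^\lambda \le T_C$ in the other direction. I would therefore state explicitly that the proof rests on feasibility of $P_\lambda$ together with the monotonicity of $t \mapsto t^{1/p}$, and nothing more.
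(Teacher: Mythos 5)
Your proof is correct and is essentially identical to the paper's: both rest on the feasibility $P_\lambda \in U({\bf f},{\bf g})$, the definition of the minimum in \eqref{OT_Kantorovich}, and the monotonicity of $t \mapsto t^{1/p}$. Your closing remark distinguishing $\langle P_\lambda, C\rangle$ from $T_C^\lambda({\bf f},{\bf g})$ is a worthwhile clarification, but the argument itself matches the paper's.
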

\begin{proof}
Since $P_{\lambda}$ is the solution to
\eqref{OT_Regularized}-\eqref{entropy}, then we have $P_{\lambda} \in U({\bf
  f}, {\bf g})$. By \eqref{OT_Kantorovich} we therefore get
$\langle P_{\lambda}, C \rangle \ge T_C({\bf f}, {\bf g})$, and hence
the inequality \eqref{S-W} follows by \eqref{W} and \eqref{S}. 
\end{proof}


\medskip
\noindent
{\bf Sinkhorn's algorithm.} 
Adding an entropic penalty term to the optimal transport problem
enforces a simple structure on the regularized optimal transport
matrix $P_{\lambda}$. 
Introducing two dual variables $\hat{\bf f} \in {\mathbb R}^n$ and
$\hat{\bf g} \in {\mathbb R}^n$ for the two marginal constraints $P \,
\mathds{1}_n = {\bf f}$ and $P^{\top} \, \mathds{1}_n ={\bf g}$ in $U({\bf f},{\bf g})$, the
Lagrangian of \eqref{OT_Regularized}-\eqref{entropy} reads
$$
{\mathcal L}(P, \hat{\bf f}, \hat{\bf g}) = \langle P, C \rangle - \frac1{\lambda}
H(P) - \hat{\bf f}^{\top} \, (P \, \mathds{1}_n - {\bf f}) - 
\hat{\bf g}^{\top} \, (P^{\top} \, \mathds{1}_n - {\bf g}).
$$
Setting $\partial_{P_{ij}} {\mathcal L}  = C_{ij} + \frac1{\lambda} \log
P_{ij} - \hat{\bf f}_i - \hat{\bf g}_j = 0$, we obtain 
\begin{equation} \label{P-scale1} 
P_{ij} = {\bf u}_i \, Q_{ij} \, {\bf v}_j, \qquad Q_{ij} :=\exp({-\lambda
  \, C_{ij}}), \qquad {\bf u}_i:=\exp({\lambda \, \hat{\bf
    f}_i}), \qquad {\bf v}_j := \exp (\lambda \, \hat{\bf g}_j).
\end{equation}
We can also write \eqref{P-scale1} in matrix factorization form,
\begin{equation} \label{P-scale2} 
P_{\lambda} = U\, Q \, V, \qquad 
U = \text{diag}({\bf u}_1, \dotsc, {\bf u}_n), \qquad 
Q = [Q_{ij}], \qquad 
V=\text{diag}({\bf v}_1, \dotsc, {\bf v}_n).
\end{equation}
A direct consequence of the form \eqref{P-scale1}, or equivalently
\eqref{P-scale2}, is that $P_{\lambda}$ is non-negative,
i.e. $P_{\lambda} \in {\mathbb R}_+^{n \times n}$. Moreover, it implies
that the computation of $P_{\lambda}$ amounts to computing two non-negative
vectors $({\bf u}, {\bf v}) \in {\mathbb R}_+^n \times {\mathbb
  R}_+^n$, known as scaling vectors. These two vectors can be obtained from the two
marginal constraints,
\begin{equation}\label{LSS1}
U \, Q \, V \, \mathds{1}_n = {\bf f}, \qquad V \, Q^{\top} \, U \, \mathds{1}_n = {\bf g}.
\end{equation}
Noting that $U \, \mathds{1}_n = {\bf u}$ and $V \, \mathds{1}_n =
{\bf v}$, we arrive at the following nonlinear equations for $({\bf
  u}, {\bf v})$,
\begin{equation}\label{LSS2}
{\bf u} \odot (Q \, {\bf v}) = {\bf f}, \qquad {\bf v} \odot (Q^{\top} {\bf u}) = {\bf g},
\end{equation}
where $\odot$ denotes the Hadamard (or entrywise) product. Problem
\eqref{LSS1}, or equivalently \eqref{LSS2}, is known as the
non-negative matrix scaling problem (see
e.g. \cite{Kalantari-Khachiyan:96,Nemirovski-Rothblum:99}) and can be solved by an iterative method known as Sinkhorn's algorithm \cite{Sinkhorn:64},
\begin{equation}\label{Alg-Sinkhorn}
{\bf u}^{(i)} = {\bf f} \, \oslash \, (Q \, {\bf v}^{(i-1)}), \qquad 
{\bf v}^{(i)} = {\bf g} \, \oslash \, (Q^{\top} \, {\bf u}^{(i)}),
\qquad i=1, \dotsc, K.
\end{equation}
Here, $\oslash$ denotes the Hadamard (or entrywise) division. 
We note that in practice we will need to use a stopping criterion. One
such criterion may be defined by monitoring the difference between the
true marginals $({\bf f}, {\bf g})$ and the marginals of the most
updated solutions. Precisely, given a small tolerance
$\varepsilon_{\text{S}} >0$, we continue Sinkhorn iterations $i=1, 2, \dotsc$ until we obtain
\begin{equation}\label{stop_criterion}
\max \{ || {\bf u}^{(i)} \odot (Q \, {\bf v}^{(i)} )  - {\bf f} ||_{\infty} , \,  ||
{\bf v}^{(i)}  \odot (Q^{\top} {\bf u}^{(i)} ) - {\bf g} ||_{\infty}
\} \le \varepsilon_{\text{S}}.
\end{equation}
After computing the scaling vectors $({\bf u}, {\bf v})$, the $p$-Sinkhorn
divergence can be computed as
\begin{equation}\label{Alg-Sinkhorn2}
S_{p, \lambda} = ( {\bf u}^{\top} \hat{Q} \, {\bf v} )^{1/p},
\qquad \hat{Q} :=  Q \odot C.
\end{equation}
%
%
%
We refer to Remark 4.5 of \cite{Peyre_Cuturi:19} for a historic perspective of
the matrix scaling problem \eqref{LSS2} and the iterative algorithm \eqref{Alg-Sinkhorn}. 

\medskip
\noindent
{\bf Convergence and complexity.} 
Suppose that our goal is to compute an $\varepsilon$-approximation $P^{\ast}$ of the optimal transport plan,
$$
{\text{find}} \, \, P^{\ast} \in U({\bf f},{\bf g}) \ \ \text{s.t.} \ \
\langle P^{\ast}, C \rangle \le \min_{P \in U({\bf f},{\bf g})}
\langle P, C \rangle + \varepsilon,
$$
where $\varepsilon >0$ is a positive small tolerance. 
This can be achieved within $K = {\mathcal O}(|| C ||_{\infty}^2\log n
\, \varepsilon^{-2})$ Sinkhorn iterations if we set $\lambda = 4 \, 
\varepsilon^{-1}  
\log n$ \cite{Dvurechensky_etal:18}. 
See also \cite{Altschuler_etal:17} for a less sharp bound on the number of iterations. 
The bounds presented in \cite{Dvurechensky_etal:18,Altschuler_etal:17} suggest that the number of iterations depends weakly on $n$. 
The main computational bottleneck of Sinkhorn's algorithm is however
the two vector-matrix multiplications against kernels $Q$ and
$Q^{\top}$ needed in each iteration with complexity ${\mathcal O}(n^2)$ if implemented naively. 
Overall, Sinkhorn's algorithm can compute an $\varepsilon$-approximate solution of
the unregularized OT problem in ${\mathcal O}(K  n^2) = {\mathcal
  O}(n^2\log n)$ operations. We also refer to Section 4.3 in \cite{Peyre_Cuturi:19} for a
few strategies that may improve this complexity. In particular, the
authors in \cite{Solomon_etal:15} exploited possible separability of
cost metrics (closely related to assumption A1 in Section
\ref{sec:assumptions} of the present work) to reduce the quadratic
complexity to
${\mathcal O}(n^{1 + 1/d} \, \log n)$; see also Remark 4.17 in
\cite{Peyre_Cuturi:19}. 
It is to be noted that although this improved cost is no longer quadratic (except when $d=1$), the resulting algorithm will still be a polynomial time
algorithm whose complexity includes a term $n^{\gamma}$ with
$\gamma = 1+1/d>1$. 
Our goal here is to exploit other possible structures in the kernels
and further reduce the cost to a log-linear cost, thereby delivering
an algorithm that runs faster than any polynomial time algorithm.

\begin{remark} \label{remark:lambda}
(Selection of $\lambda$) On the one hand, the
  regularization parameter $\lambda$ needs to be large enough for the
  regularized OT problem to be close to the original OT problem. This
  is indeed reflected in the choice $\lambda = 4 \, \varepsilon^{-1}
  \log n$ that enables achieving an $\varepsilon$-approximation of the
  optimal transport plan: the smaller the tolerance
  $\varepsilon$, the larger $\lambda$. On the other hands, the
  convergence of Sinkhorn's algorithm deteriorates as $\lambda \rightarrow \infty$;
  see e.g. \cite{FranklinLorenz:89,Knight:08}. Moreover, as $\lambda
  \rightarrow \infty$, more and more entries of the kernel $Q$ (and
  hence entries of $Q \, {\bf v}$ and $Q^{\top} {\bf u}$) may
  become ``essentially'' zero. In this case, Sinkhorn's algorithm may become numerically
  unstable due to division by zero. The selection of the regularization parameter
  $\lambda$ should therefore be based on a trade-off between
  accuracy, computational cost, and robustness. 
\end{remark}

%
%
%
%
%
%
%
%
%

\section{A fast algorithm for computing Sinkhorn divergence}
\label{sec:ALG}

We consider an important class of cost matrices for which
the complexity of each Sinkhorn iteration can be significantly reduced
from ${\mathcal O}(n^2)$ to ${\mathcal O}(n \, \log^2 n)$. 
Precisely, we consider cost matrices that can be decomposed into a
$d$-term sum of Kronecker product factors, where each term is asymptotically
smooth; see Section \ref{sec:assumptions}. 
The new Sinkhorn's algoritm after $K = {\mathcal O}(\log n)$ iterations
will then have a near-linear complexity ${\mathcal O}(n \, \log^3 n)$;
see Section \ref{sec:H-Sinkhorn} and Section \ref{sec:algorithm}. 
Importantly, such class of cost matrices induce a
wide range of optimal transport distances, including the quadratic Wasserstein metric and its corresponding
Sinkhorn divergence.

\subsection{Main assumptions}
\label{sec:assumptions}

To make the assumptions precise, we first introduce a special Kronecker sum of matrices. 
%
\begin{definition}\label{K_sum}
The ``all-ones Kronecker sum'' of two matrices $A \in {\mathbb R}^{p \times p}$ and $B \in
{\mathbb R}^{q \times q}$, denoted by $A \oplus B$, is defined as
$$
A \oplus B := A \otimes J_q + J_p \otimes B, 
$$
where $J_p$ is the all-ones matrix of size $p \times p$, and $\otimes$
denotes the standard Kronecker product. 
\end{definition}
It is to be noted that the all-ones Kronecker sum is different from the common
Kronecker sum of two matrices in which identity matrices $I_p$ and $I_q$
replace all-ones matrices $J_p$ and $J_q$. This new operation facilitates working with elementwise matrix operations. 
%
%
We also note that the two-term sum in Definition \ref{K_sum} can be recursively
extended to define all-ones Kronecker sums with arbitrary number
of terms, thanks to the following associative property inherited from matrix algebra,
$$
A_1 \oplus A_2 \oplus A_3 := (A_1 \oplus A_2) \oplus A_3 = A_1 \oplus (A_2 \oplus A_3).
$$

We next consider kernel matrices generated by kernel functions with a special type of regularity, known as ``asymptotic smoothness'' \cite{Hackbusch:15}.
\begin{definition}\label{Asymptotically_Smooth}
A kernel function $\kappa: {\mathbb R}
\times {\mathbb R} \rightarrow {\mathbb R}$ on the real line 
is said to be ``asymptotically smooth'' if there exist constants $c_0, \alpha, \beta$ such that for some $s \in {\mathbb R}$ the following holds,
\begin{equation}\label{Asym_Smooth_kernel}
| \partial_x^m \kappa(x,y) | \le c_0 \, m! \, \alpha^m \, m^{\beta} \,
|x-y|^{-m -s},  \quad (x,y)\in {\mathbb R}^2, \ \ \ x \neq y, \ \ \
m\in {\mathbb N}.
\end{equation}
A kernel matrix $A = [\kappa(x_i,y_j)] \in {\mathbb R}^{q
  \times q}$, generated by an asymptotically smooth kernel function
$\kappa$, is called an asymptotically smooth kernel matrix.

\end{definition}

We now make the following two assumptions.
\begin{itemize}
\item[A1.] The cost matrix $C \in {\mathcal M}^{n \times n}$ can
  be decomposed into a $d$-term all-ones Kronecker
  sum
\begin{equation}\label{A1}
C = C_d \oplus \dotsi \oplus C_1, \qquad C_k \in {\mathcal M}^{n_k
  \times n_k}, \ \ \  k=1, \dotsc, d, \ \ \ n= \prod_{k=1}^d n_{k},
\end{equation}
where each matrix $C_k \in {\mathcal M}^{n_k \times n_k}$ is generated
by a metric $c_k:{\mathbb R} \times {\mathbb R} \rightarrow {\mathbb
  R}_+$ on the real line.
%

\item[A2.] The kernel matices $Q^{(k)} = \exp[- \lambda C^{(k)}]$ and $\hat{Q}^{(k)}  = C^{(k)} \odot
  Q^{(k)}$, with $k=1, \dotsc, d$, generated by the kernels $\kappa_k(x,y) = \exp(- \lambda \, c_k(x,y))$
  and $\hat{\kappa}_k(x,y) = c_k(x,y) \, \kappa_k(x,y)$, where
  $(x,y)\in {\mathbb R}^2$, are asymptotically smooth. 
\end{itemize}

It is to be noted that by assumption A2 all $2 \, d$ kernels $\kappa_k$ and $\hat{\kappa}_k$, with $k=1, \dotsc, d$,
  satisfy similar estimates of form \eqref{Asym_Smooth_kernel} with possibly
  different sets of constants $(c_0, \alpha, \beta, s)$.

\medskip
\noindent
{\bf $L^p$ cost functions.} An important class of cost functions for which 
assumptions A1-A2 hold includes the $L^p$ cost functions, given by 
\begin{equation}\label{ExC}
c({\bf x}, {\bf y}) = || {\bf x} - {\bf y} ||_p^p = \sum_{k=1}^d | x^{(k)} - y^{(k)} |^p, \quad
 {\bf x}=(x^{(1)}, \dotsc, x^{(d)})  \in {\mathbb R}^d, \ \ \ {\bf y}=(y^{(1)}, \dotsc, y^{(d)}) \in {\mathbb R}^d.
\end{equation}
Here, $||\cdot||_p$ denotes the
$L^p$-norm in ${\mathbb R}^d$, with $p\in [1,\infty)$. 
%
Importantly, such cost functions take the form of \eqref{C1} with $d({\bf x}, {\bf
  y}) = || {\bf x} - {\bf y} ||_p$, for which $p$-Wasserstein metric
and $p$-Sinkhorn divergence are well defined. In particular, the case
$p=2$ corresponds to the quadratic Wasserstein metric. 
%
%
%
%
%
%
%
%

We will first show that assumption A1 holds for $L^p$ cost functions. Let
the two sets of discrete points $\{ {\bf x}_i \}_{i=1}^n \in {\mathcal
  X}$ and $\{ {\bf y}_j \}_{j=1}^n \in
{\mathcal Y}$, at which the two probability vectors
$({\bf f}, {\bf g})$ are defined, 
be given on two regular grids in ${\mathbb
  R}^d$. We note that if the original probability vectors are given
on irregular grids, we may find their values on regular grids by
interpolation with a cost ${\mathcal O}(n)$ that would not change the
near-linearity of our overall target cost. 
%
Set $I_n := \{ 1, \dotsc, n \}$. Then, to each pair of indices $i \in I_n$ and $j \in I_n$ of the cost matrix $C =
[c({\bf x}_i, {\bf y}_j)]$, we can assign a pair of $d$-tuples $(i_1, \dotsc,i_d)$ and $(j_1,
\dotsc, j_d)$ in the Cartesian product of $d$ finite sets $I_{n_1} \times \dotsc \times I_{n_d}$, where $n= \prod_{k=1}^d
n_{k}$. Consequently, the additive representation of $c$ in \eqref{ExC} will imply
\eqref{A1}. 

It is also easy to see that assumption A2 holds for $L^p$ cost
functions, with corresponding kernels,
$$
\kappa(x, y) = \exp (- \lambda \, |x-y|^p), \qquad \hat{\kappa}(x ,y) =
|x-y|^p \, \exp (- \lambda \, |x-y|^p), \qquad (x,y) \in {\mathbb
  R}^2.
$$
Indeed, these kernels are asymptotically smooth due to the
exponential decay of $\exp (- \lambda \, |x -y|^p)$ as $|x-y|$
increases. For instance, in the particular case when $p=2$, the
estimate \eqref{Asym_Smooth_kernel} holds for both $\kappa$ and $\hat{\kappa}$ with
$c_0=1$, $\alpha=2$, $\beta = 0$, and $s=0$. 
%
%
To show this, we note that
$$
\partial_x^m \exp (- \lambda \, (x-y)^2)  = (- \sqrt{\lambda})^m
H_m(\sqrt{\lambda}(x-y)) \, \exp (- \lambda \, (x-y)^2),
$$
where $H_m(z)$ is the Hermite polynomial of degree $m$, satisfying the
following inequality \cite{Indritz:61},
$$
H_m(z) \le  (2^m \, m!)^{1/2} \exp(z^2/2), \qquad z \in {\mathbb R}.
$$
Setting $z := \sqrt{\lambda}(x-y) \neq 0$, and noting that $ (2^m \,
m!)^{1/2} \le  2^m \, m!$ for $m \in {\mathbb N}$, we obtain
$$
|\partial_x^m \exp (- z^2)|  = (\sqrt{\lambda})^m
|H_m(z)| \, \exp (- z^2) \le (2 \sqrt{\lambda})^m\, m! \, \exp(-z^2/2)
\le  (2 \sqrt{\lambda})^m\, m! \, |z|^{-m}.
$$
Hence we arrive at
$$
|\partial_x^m \exp (-  \lambda \, (x-y)^2)|  \le  2^m\, m! \, |x-y|^{-m}.
$$
Similarly, we can show that the same estimate holds for $|\partial_x^m
(x-y)^2 \exp (-  \lambda \, (x-y)^2)|$. 

%

\subsection{Hierarchical low-rank approximation of Sinkhorn iterations}
\label{sec:H-Sinkhorn}

As noted in Section \ref{sec:Sink},  the main computational
bottleneck of Sinkhorn's algorithm is the matrix-vector
multiplications by kernels $Q$ and $Q^{\top}$ in each
iteration \eqref{Alg-Sinkhorn} and by kernel $\hat{Q}$ in \eqref{Alg-Sinkhorn2}. 
We present a strategy that reduces the complexity of each Sinkhorn
iteration to ${\mathcal O}(n \, \log^2
n)$, enabling Sinkhorn's algorithm to achieve a
near-linear overall complexity ${\mathcal O}(n \, \log^3 n)$. 
The proposed strategy takes two steps, following two observations made based on assumptions A1 and
A2.



\subsubsection{Step 1: decomposition into 1D problems}
\label{sec:part1}

A direct consequence of assumption A1 is that both kernel matrices $Q$
and $\hat{Q}$ have separable multiplicative structures. 

\begin{proposition} \label{Prop_separable}
Under assumption A1 on the cost matrix $C$, the matrices
$Q=\exp[-\lambda C]$ and $\hat{Q}=Q \odot C$ will have Kronecker product structures,
\begin{equation}\label{Q_form}
Q = \exp [-\lambda C] = Q^{(d)} \otimes \dotsi \otimes Q^{(1)}, \qquad
Q^{(k)} = \exp[- \lambda C^{(k)}], \ \ \ k= 1, \dotsc, d,
\end{equation}
\begin{equation}\label{Qhat_form}
\hat{Q}=Q \odot C = \sum_{k=1}^d A_k^{(d)} \otimes
\dotsi \otimes A_k^{(1)}, 
\qquad 
A_k^{(m)} =
\left\{ \begin{array}{ll}
\hat{Q}^{(k)} := C^{(m)} \odot Q^{(m)}, &  k = m \\
Q^{(m)}, & k \neq m 
\end{array} \right..
\end{equation}
\end{proposition}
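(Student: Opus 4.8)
The plan is to reduce both claims to a single entrywise identity and then recognize the resulting expressions as iterated Kronecker products; no smoothness (assumption A2) is needed here, only the additive structure of assumption A1 together with two elementary algebraic facts. The cornerstone is the observation—already implicit in the multi-index assignment introduced for the $L^p$ costs—that the all-ones Kronecker sum is exactly the operation producing \emph{additive} entries across the factor indices. Identifying each $i \in I_n$ with a $d$-tuple $(i_1,\dotsc,i_d) \in I_{n_1} \times \dotsi \times I_{n_d}$, and likewise $j$, I would first establish
\begin{equation*}
\bigl( C_d \oplus \dotsi \oplus C_1 \bigr)_{ij} = \sum_{k=1}^d (C_k)_{i_k j_k}.
\end{equation*}
For $d=2$ this is immediate from Definition \ref{K_sum}, since $(A \otimes J_q)_{ij} = A_{i_1 j_1}$ and $(J_p \otimes B)_{ij} = B_{i_2 j_2}$: each all-ones factor simply copies a single entry of the other factor across the corresponding block. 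The general case then follows by induction on $d$, invoking the associativity of $\oplus$ recorded after Definition \ref{K_sum}.

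For the first identity \eqref{Q_form}, I would apply the entrywise exponential to this formula and use $\exp(a+b)=\exp(a)\exp(b)$ to turn the sum into a product,
\begin{equation*}
Q_{ij} = \exp\!\Bigl( -\lambda \sum_{k=1}^d (C_k)_{i_k j_k}\Bigr) = \prod_{k=1}^d \exp\!\bigl(-\lambda (C_k)_{i_k j_k}\bigr) = \prod_{k=1}^d \bigl(Q^{(k)}\bigr)_{i_k j_k}.
\end{equation*}
Since the right-hand side is precisely the $(i,j)$ entry of the iterated Kronecker product $Q^{(d)} \otimes \dotsi \otimes Q^{(1)}$, the two matrices agree entrywise, which proves \eqref{Q_form}.

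For the second identity \eqref{Qhat_form}, I would form the Hadamard product $\hat Q = Q \odot C$ entrywise, substitute the product form of $Q_{ij}$ and the additive form of $C_{ij}$, and distribute,
\begin{equation*}
\hat Q_{ij} = \Bigl(\prod_{k=1}^d \bigl(Q^{(k)}\bigr)_{i_k j_k}\Bigr)\Bigl(\sum_{m=1}^d (C_m)_{i_m j_m}\Bigr) = \sum_{m=1}^d \bigl(\hat Q^{(m)}\bigr)_{i_m j_m} \prod_{k \neq m} \bigl(Q^{(k)}\bigr)_{i_k j_k},
\end{equation*}
where the factor $(C_m)_{i_m j_m}$ has been absorbed into the $m$-th term to form $\hat Q^{(m)} = C_m \odot Q^{(m)}$. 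Reading off each summand, the $m$-th term is exactly the $(i,j)$ entry of the Kronecker product whose $m$-th slot carries $\hat Q^{(m)}$ and whose every other slot carries $Q^{(k)}$, i.e.\ the matrix $A_m^{(d)} \otimes \dotsi \otimes A_m^{(1)}$ of \eqref{Qhat_form}; summing over $m$ then delivers the claim.

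I expect the only delicate point to be the bookkeeping: fixing one consistent convention for the bijection $i \leftrightarrow (i_1,\dotsc,i_d)$ and checking that it is compatible with the block-indexing of the iterated Kronecker product, so that both ``copying an entry across a block'' in the base case and ``reading off a summand'' in the final step are literally statements about Kronecker indexing (note in particular that the leftmost factor in $C_d \oplus \dotsi \oplus C_1$ pairs with the outermost block, which dictates how the tuple components are labeled). Once that convention is pinned down, both identities are immediate consequences of $\exp(a+b)=\exp(a)\exp(b)$ and the distributive law.
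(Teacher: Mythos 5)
Your proof is correct, and it takes a visibly different route from the paper's. The paper proves \eqref{Q_form} and \eqref{Qhat_form} by citing two appendix lemmas (Lemma \ref{lem1} and Lemma \ref{lem2}), each established by induction on $d$ through explicit block-matrix partitions: the base case $d=2$ is verified by writing out $C^{(2)} \oplus C^{(1)}$ and the Hadamard products as $n_2 \times n_2$ arrays of $n_1 \times n_1$ blocks, and the inductive step peels off the outermost factor. You instead work entrywise: you first derive the scalar formula $\bigl(C_d \oplus \dotsi \oplus C_1\bigr)_{ij} = \sum_{k=1}^d (C_k)_{i_k j_k}$ via the multi-index bijection $i \leftrightarrow (i_1,\dotsc,i_d)$, after which both identities collapse to $\exp(a+b)=\exp(a)\exp(b)$ and the distributive law, with the iterated Kronecker products recognized through their entrywise indexing. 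The two arguments prove exactly the same facts, but they trade different overheads. The paper's block induction never needs an explicit index convention—the recursion mirrors the associative definition of $\oplus$, so the bookkeeping is absorbed into the block structure—at the price of a separate induction for each of the two identities. Your entrywise computation handles all $d$ and both identities uniformly in one pass (the only induction left is the trivial one for the additive entry formula), and it makes transparent \emph{why} the exponential converts $\oplus$ into $\otimes$; the price is precisely the delicate point you flag yourself, namely fixing the bijection so that the leftmost factor $C_d$ pairs with the outermost Kronecker block, which must be stated carefully for the proof to be literally correct. Your observation that only assumption A1 is used, not A2, matches the paper, where asymptotic smoothness enters only later in the hierarchical approximation step.
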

\begin{proof} 
The first expression \eqref{Q_form} follows from Lemma \ref{lem1} in
the appendix and \eqref{A1}. The
second expression \eqref{Qhat_form} follows from Lemma \ref{lem2} in
the appendix and \eqref{A1} and \eqref{Q_form}.
\end{proof}

Exploiting the block structure of the Kronecker product matrices $Q$
and $\hat{Q}$, one can compute the matrix-vector products $Q {\bf
  w}$, $Q^{\top} {\bf w}$, and $\hat{Q} {\bf w}$ without
explicitly forming $Q$ and $\hat{Q}$. 
This can be achieved using the vector operator $\text{vec}(.)$ that
transforms a matrix into a vector by stacking its columns beneath one
another. 
We denote by ${\bf a} =\text{vec}(A)$ the vectorization of matrix $A
\in {\mathbb R}^{p \times q }$
formed by stacking its columns into a single column vector ${\bf a}
\in {\mathbb R}^{p q  \times 1 }$.

\begin{proposition} \label{Prop_Aw}
Let $A = A^{(d)} \otimes \dotsi \otimes A^{(1)} \in {\mathbb R}^{n \times n}$, where $A^{(k)} \in
{\mathbb R}^{n_k \times n_k}$ and $n= \prod_{k=1}^d n_{k}$. Then, the
matrix-vector multiplication $A {\bf w}$, where ${\bf w} \in {\mathbb
  R}^{n \times 1}$, amounts to 
$\tilde{n}_k= \prod_{\substack{i=1, \, i \neq k}}^d n_i$ 
matrix-vector multiplications $A^{(k)} {\bf z}$ for $\tilde{n}_k$ different vectors ${\bf z} \in {\mathbb R}^{n_k \times
  1}$, with $k=1,\dotsc, d$. 
\end{proposition}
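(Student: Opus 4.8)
The plan is to prove the claim by factoring $A$ into $d$ single-mode operators and analyzing each one in turn. The one algebraic tool I need is the mixed-product property of the Kronecker product, $(A_1 \otimes B_1)(A_2 \otimes B_2) = (A_1 A_2) \otimes (B_1 B_2)$, together with the two-factor vectorization identity $(B \otimes A)\,\text{vec}(X) = \text{vec}(A X B^{\top})$. Both are standard and match the column-stacking convention already fixed for $\text{vec}(\cdot)$ in the excerpt, so I would simply recall them (or cite the appendix lemmas already used for Proposition \ref{Prop_separable}).

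First I would write $A = A^{(d)} \otimes \dotsi \otimes A^{(1)}$ as an ordered product of $d$ matrices, each carrying a single nontrivial factor surrounded by identities:
\[
A = \prod_{k=1}^d M_k, \qquad M_k := I_{p_k} \otimes A^{(k)} \otimes I_{q_k}, \quad p_k = \prod_{i>k} n_i, \ \ q_k = \prod_{i<k} n_i .
\]
That this product equals $A$ follows by repeated use of the mixed-product property: multiplying the $M_k$ collapses all the identity blocks and leaves exactly $A^{(d)} \otimes \dotsi \otimes A^{(1)}$. I would verify this by a short induction on $d$, the base case $d=1$ reading $A = M_1 = A^{(1)}$ and the inductive step peeling off one factor; note that the $M_k$ act on disjoint modes and hence commute, so the order of the product is immaterial.

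Next I would analyze a single operator $M_k$ acting on a vector. Writing $M_k = I_{p_k} \otimes \bigl(A^{(k)} \otimes I_{q_k}\bigr)$, the outer $I_{p_k}\otimes$ makes $M_k$ block diagonal with $p_k$ identical diagonal blocks $A^{(k)} \otimes I_{q_k}$, so applying $M_k$ amounts to $p_k$ independent applications of $A^{(k)} \otimes I_{q_k}$. By the vectorization identity, $\bigl(A^{(k)} \otimes I_{q_k}\bigr)\,\text{vec}(X) = \text{vec}\bigl(X (A^{(k)})^{\top}\bigr)$ with $X \in \mathbb{R}^{q_k \times n_k}$, i.e. a right-multiplication by $(A^{(k)})^{\top}$, which is exactly $q_k$ matrix-vector products $A^{(k)}\mathbf{z}$ with $\mathbf{z} \in \mathbb{R}^{n_k}$ (one per row of $X$). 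Hence applying $M_k$ costs precisely $p_k q_k = \prod_{i\ne k} n_i = \tilde n_k$ products by $A^{(k)}$, and nothing else. Carrying out the factors $M_1, \dotsc, M_d$ successively therefore realizes $A\mathbf{w}$ while using, for each $k$, exactly $\tilde n_k$ products of the form $A^{(k)}\mathbf{z}$, which is the assertion.

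The main obstacle I anticipate is not the algebra of the factorization but the index bookkeeping in the last step: making rigorous the statement that $M_k$ "applies $A^{(k)}$ to $\tilde n_k$ fibers." The cleanest route is to pass once and for all to the tensor picture, identifying $\mathbb{R}^{n}$ with the $d$-fold reshape $\mathbb{R}^{n_d} \otimes \dotsi \otimes \mathbb{R}^{n_1}$ compatibly with $\text{vec}(\cdot)$, and to observe that $M_k$ is precisely mode-$k$ multiplication by $A^{(k)}$, which by definition contracts $A^{(k)}$ against each of the $\tilde n_k$ mode-$k$ fibers. I would isolate this reshape correspondence as a small lemma to keep the main argument uncluttered, taking care throughout that the ordering of the factors and of the tensor indices is consistent with the convention $Q^{(d)} \otimes \dotsi \otimes Q^{(1)}$ used elsewhere in the paper.
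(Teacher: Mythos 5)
Your proof is correct, and it takes a genuinely different route from the paper's. The paper argues by induction on $d$ using only the two-factor identity: it writes $A\,{\bf w} = (A^{(d)} \otimes \tilde{A})\,\text{vec}(W) = \text{vec}(\tilde{A}\, W A^{(d)\top})$ with $\tilde{A} = A^{(d-1)} \otimes \dotsi \otimes A^{(1)}$, computes $G := W A^{(d)\top}$ row by row via $\tilde{n} = \prod_{k=1}^{d-1} n_k$ products $A^{(d)}{\bf w}_k$, then applies $\tilde{A}$ to each of the $n_d$ columns of $G$ and invokes the induction hypothesis on each column, so the counts $\tilde{n}_k$ accumulate across recursion levels. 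You instead factor $A = \prod_{k=1}^d M_k$ with $M_k = I_{p_k} \otimes A^{(k)} \otimes I_{q_k}$ via the mixed-product property and count $p_k q_k = \tilde{n}_k$ fiber products per factor. This localizes the bookkeeping — each $M_k$ accounts for exactly the $\tilde{n}_k$ products by $A^{(k)}$, with no counting threaded through a recursion — and it makes explicit both the mode-$k$ tensor-multiplication structure and the order-independence of the sweeps (the commutativity of the $M_k$, though, is a nicety rather than a necessity for the count). What the paper's version buys in exchange is economy of means: it needs no factorization lemma, only the single vectorization identity, and its two-step recursion mirrors directly the nested implementation used later in the algorithm. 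Your convention checks are sound: the identity $(B \otimes A)\,\text{vec}(X) = \text{vec}(A X B^{\top})$ matches the column-stacking $\text{vec}(\cdot)$ fixed in the paper, and your placement $p_k = \prod_{i>k} n_i$, $q_k = \prod_{i<k} n_i$ is consistent with the ordering $A^{(d)} \otimes \dotsi \otimes A^{(1)}$; the residual induction you use to verify the factorization is routine and does strictly less work than the paper's inductive step.
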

\begin{proof} 
The case $d=1$ is trivial, noting that $\tilde{n}_1 = 1$. 
Consider the case $d=2$. Then we have $A = A^{(2)} \otimes
A^{(1)} \in {\mathbb R}^{n \times n}$, with $A^{(1)} \in {\mathbb
  R}^{n_1 \times n_1}$, $A^{(2)} \in {\mathbb
  R}^{n_2 \times n_2}$, and $n = n_1  n_2$. 
Let $W \in {\mathbb R}^{n_1 \times n_2}$ be the matrix whose
vectorization is the vector ${\bf w} \in {\mathbb R}^{n_1  n_2
  \times 1 }$, i.e. ${\bf w} =\text{vec}(W)$. 
Then, we have
$$
A {\bf w} = (A^{(2)} \otimes A^{(1)}) \, {\text{vec}}(W) =
\text{vec}(A^{(1)} W A^{(2) \top}).
$$
Hence, computing $A {\bf w}$ amounts to computing $A^{(1)} W A^{(2)
  \top}$. 
This can be done in two steps. 
We first compute $G:=W  A^{(2) \top}$,
$$
G:= W  A^{(2) \top} = 
\left(\begin{array}{c}
  {\bf w}_1^{\top}\\
  \vdots \\
  {\bf w}_{n_1}^{\top}
\end{array}\right) A^{(2) \top} = 
\left(\begin{array}{c}
  {\bf w}_1^{\top} A^{(2) \top}\\
  \vdots \\
  {\bf w}_{n_1}^{\top} A^{(2) \top}
\end{array}\right) =
\left(\begin{array}{c}
 (A^{(2)}  {\bf w}_1)^{\top}\\
  \vdots \\
  (A^{(2)}  {\bf w}_{n_1})^{\top}
\end{array}\right),
$$
where ${\bf w}_k \in {\mathbb R}^{n_2 \times 1}$ is the $k$-th row of
matrix $W$ in the form of a column vector. 
For this we need to perform $n_1$
matrix-vector multiplies $A^{(2)}  {\bf w}_k$, with $k=1, \dotsc, n_1$. 
%
We then compute $A^{(1)} G$,
$$
A^{(1)} G = A^{(1)} [ {\bf g}_1 \dotsi {\bf g}_{n_2} ] =
[A^{(1)} {\bf g}_1 \dotsi A^{(1)} {\bf g}_{n_2} ], 
$$
where ${\bf g}_k \in {\mathbb R}^{n_1 \times 1}$ is the $k$-th column of
matrix $G$. We therefore need to perform $n_2$ matrix-vector
multiplies $A^{(1)} {\bf g}_k$, with $k=1, \dotsc, n_2$. 
%
%
%
%
%
%
Overall, we need $n_1$ matrix-vector multiplies $A^{(2)}  {\bf z}$
for $n_1$ different vectors ${\bf z}$, and $n_2$ matrix-vector
multiplies $A^{(1)} {\bf z}$ for $n_2$ different vectors ${\bf z}$, as
desired. 
This can be recursively extended to any dimension $d \ge 3$. Setting $\tilde{A}:=A^{(d-1)} \otimes \dotsi \otimes A^{(1)}$, we can write
\begin{equation}\label{Aw}
A {\bf w} = (A^{(d)} \otimes A^{(d-1)} \otimes \dotsi \otimes A^{(1)})  {\bf w} 
= (A^{(d)} \otimes \tilde{A})  \, {\text{vec}}(W) 
= \text{vec}(\tilde{A} W A^{(d) \top}).
\end{equation}
This can again be done in two steps. 
We first compute $G:=W  A^{(d) \top}$, where $W \in {\mathbb
  R}^{\tilde{n} \times n_d}$ with $\tilde{n} = \prod_{k=1}^{d-1}n_k$,
$$
G:= W  A^{(d) \top} = 
\left(\begin{array}{c}
  {\bf w}_1^{\top}\\
  \vdots \\
  {\bf w}_{\tilde{n}}^{\top}
\end{array}\right) A^{(d) \top} = 
\left(\begin{array}{c}
  {\bf w}_1^{\top} A^{(d) \top}\\
  \vdots \\
  {\bf w}_{\tilde{n}}^{\top} A^{(d) \top}
\end{array}\right) =
\left(\begin{array}{c}
 (A^{(d)}  {\bf w}_1)^{\top}\\
  \vdots \\
  (A^{(d)}  {\bf w}_{\tilde{n}})^{\top}
\end{array}\right),
$$
where ${\bf w}_k \in {\mathbb R}^{n_d \times 1}$ is the $k$-th row of
matrix $W$ in the form of a column vector. 
To compute $G$ we need to perform $\tilde{n}$
matrix-vector multiplies $A^{(d)}  {\bf w}_k$, with $k=1, \dotsc, \tilde{n}$
%
%
We then compute $\tilde{A} G$,
$$
\tilde{A} G = \tilde{A} [ {\bf g}_1 \dotsi {\bf g}_{n_d} ] =
[ \tilde{A} {\bf g}_1 \dotsi \tilde{A} {\bf g}_{n_d} ], 
$$
where ${\bf g}_k \in {\mathbb R}^{\tilde{n} \times 1}$ is the $k$-th column of
matrix $G$. 
We therefore need to perform $n_d$ matrix-vector multiplies $\tilde{A}
{\bf g}_k$, with $k=1, \dotsc, n_d$. Noting that each multiply $\tilde{A}
{\bf g}_k$ is of the same form as the multiply \eqref{Aw} but in $d-1$ dimensions, the
proposition follows easily by induction. 
\end{proof}

\medskip
\noindent
{\bf Reduction in complexity due to step 1.} 
Proposition \ref{Prop_separable} and Proposition \ref{Prop_Aw} imply
\begin{equation}\label{cost_step1}
\text{cost}(Q {\bf w}) = \sum_{k=1}^d \tilde{n}_k \,
\text{cost}(Q^{(k)} {\bf z}), 
\qquad
\text{cost}(\hat{Q} {\bf w}) = d \, \text{cost}(Q {\bf w}).
\end{equation}
Indeed, the original $d$-dimensional problems $Q {\bf
  w}$ and $\hat{Q} {\bf w}$ turn into several smaller one-dimensional
problems of either $Q^{(k)} {\bf z}$ or $\hat{Q}^{(k)} {\bf
  z}$ form. This already amounts to a significant gain in efficiency. For instance, if we compute each of these smaller problems by performing ${\mathcal
  O}(n_k^2)$ floating-point arithmetic operations, then the complexity
of $Q {\bf w}$ will become 
$$
\text{cost}(Q {\bf w}) =\sum_{k=1}^d \tilde{n}_k {\mathcal
  O}(n_k^2) = {\mathcal O}(n \sum_{k=1}^d n_k).
$$
And in the particular case when $n_1 =
\dotsi = n_d = n^{1/d}$, we will have 
$$
\text{cost}(Q {\bf w}) = d \, \mathcal {O}(n^{1+1/d}).
$$ 

It is to be noted that this improvement in the efficiency (for $d \ge 2$) was also
achieved in \cite{Solomon_etal:15}. 
Although this improved cost is no longer quadratic (except in 1D
problems when $d=1$), the resulting algorithm will still be a polynomial time
algorithm whose complexity includes a term $n^{\gamma}$ with
$\gamma>1$. For instance for 2D and 3D problems (considered
in \cite{Solomon_etal:15}), the overall complexity of Sinkhorn's algorithm after $K = \mathcal
{O}(\log n)$ iterations would be $\mathcal {O}(n^{3/2} \, \log n)$ and
$\mathcal {O}(n^{4/3} \, \log n)$, respectively. 
In what follows, we will further apply the technique of hierarchical matrices to the
smaller one-dimensional problems, thereby delivering an algorithm that
achieves log-linear
complexity and hence runs faster than any
polynomial time algorithm.

\subsubsection{Step 2: approximation of 1D problems by hierarchical matrices}
\label{sec:part2}
We will next present a fast method for computing the smaller 1D problems $Q^{(k)} {\bf z}$ and $\hat{Q}^{(k)} {\bf z}$
obtained in step 1. 
We will show that, thanks to
assumption A2, the hierarchical matrix technique
\cite{Hackbusch:15} can be employed to compute each one-dimensional
problem with a near-linear complexity ${\mathcal O}(n_k \, \log^2
n_k)$.



Since by assumption A2 the kernels $\kappa_k$ and $\hat{\kappa}_k$ that generate $Q^{(k)}$
and $\hat{Q}^{(k)}$, with $k=1, \dotsc, d$, are all asymptotically
smooth and satisfy similar estimates of form
\eqref{Asym_Smooth_kernel}, the numerical treatment and complexity of
all $2 \, d$ one-dimensional problems will be the same. Hence, in what
follows, we will drop the dependence on $k$
and consider a general one-dimensional problem $A {\bf
  z}$, where $A \in {\mathbb R}^{n_k \times n_k}$ is a kernel matrix
generated by a one-dimensional kernel $\kappa: {\mathbb R} \times
{\mathbb R} \rightarrow {\mathbb R}$ satisfying
\eqref{Asym_Smooth_kernel}. 
Precisely, 
let $I_{n_k} := \{ 1, \dotsc, n_k \}$ be an index set with
cardinality $\# I_{n_k}= n_k$. Consider the kernel matrix 
$$
A := [\kappa(x_i,y_j)] \in {\mathbb R}^{n_k \times n_k}, \qquad i, j
\in I_{n_k}, 
$$
uniquely determined by two sets of
discrete points $\{ x_i \}_{i \in I_{n_k}} \in {\mathbb R}$ and $\{ y_j \}_{j \in
  I_{n_k}} \in {\mathbb R}$ and an asymptotically smooth kernel $\kappa(x,y)$, with
$(x,y) \in {\mathbb R}^2$, satisfying \eqref{Asym_Smooth_kernel}. 
Let $A\arrowvert_{\tau \times \sigma}$ denote a matrix block of the square
kernel matrix $A$, characterized by two index sets $\tau \subset I_{n_k}$ and $\sigma \subset I_{n_k}$. We define the diameter of $\tau$ and the distance between $\tau$ and $\sigma$ by
$$
\text{diam}(\tau) = |\max_{i \in {\tau}} x_i - \min_{i \in {\tau}} x_i|,
\qquad  
 \text{dist}(\tau, \sigma) = \min_{i \in \tau, \, j \in \sigma} |x_i-y_j|.
$$
The two sets of discrete points and the position of the matrix block
is illustrated in Figure \ref{fig_block_matrix}. Note that when the
cardinality of the index sets $\tau$ and $\sigma$ are equal ($\#\tau =
\#\sigma$), the matrix block $A\arrowvert_{\tau \times \sigma}$ will be
a square matrix. 
%
%
%
\begin{figure}[!h]
\begin{center}
\includegraphics[width=0.7\linewidth]{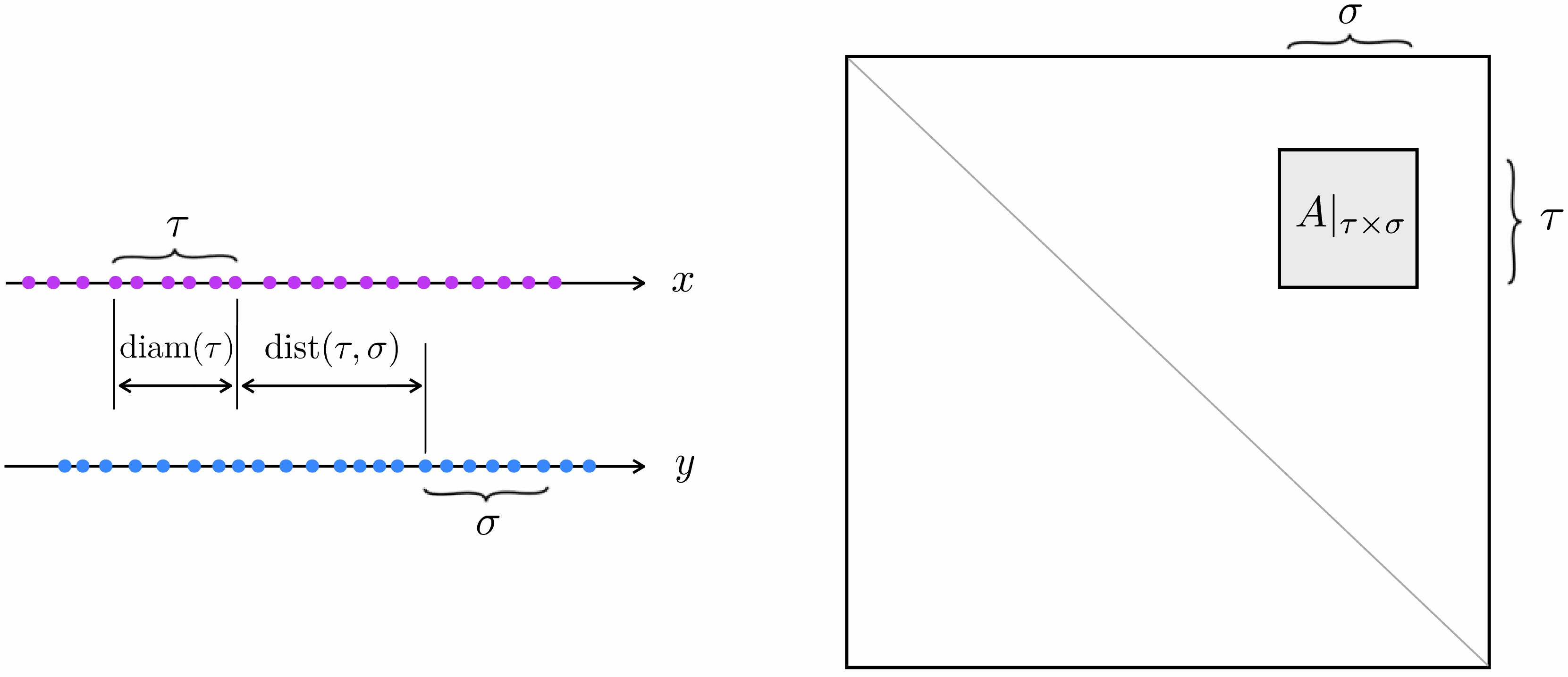}     
\caption{Left: the set of discrete points $\{ x_i \}_{i \in I_{n_k}}$
  (purple circles) and
$\{ y_j \}_{j \in I_{n_k}}$ (blue circles) and two index sets $\tau \subset I_{n_k}$
and $\sigma \subset I_{n_k}$. Right: position of the matrix block
$A\arrowvert_{\tau \times \sigma}$ in the kernel matrix $A=
[\kappa(x_i,y_j)]$ corresponding to the two index sets $\tau$ and $\sigma$.}
\label{fig_block_matrix}
\end{center}
\end{figure}

The hierarchical matrix technique for computing $A {\bf z}$ consists
if three parts (see details below): 
\begin{itemize}
\item[I.] construction of a hierarchical block structure that
  partitions $A$ into low-rank (admissible) and full-rank (inadmissible)
  blocks; 

\item[II.] low-rank approximation of admissible blocks by separable expansions; 

\item[III.] assembling the output $A {\bf z}$ through a loop over all admissible and
  inadmissible blocks.
\end{itemize}

\medskip
\noindent
{\bf I. Hierarchical block structure.} We employ a quadtree structure,
i.e. a tree structure in which each node/vertex (square block) has either
four or no children (square sub-blocks). Starting with the original matrix $A$ as
the tree's root node with $\tau=\sigma= I_{n_k}$, we recursively split each square block $A\arrowvert_{\tau \times \sigma}$ into
four square sub-blocks provided $\tau$ and $\sigma$ do not satisfy the following ``admissibility''
condition,
\begin{equation}\label{admissibility}
\eta := \frac{\text{diam}(\tau)}{\text{dist}(\tau, \sigma)} \le \eta_0.
\end{equation}
Here, $\eta_0>0$ is a fixed number to be determined later; see Section
\ref{sec:accuracy}. 
If the above admissibility condition holds, we do not split the block
$A\arrowvert_{\tau \times \sigma}$ and label it as an admissible leaf. 
We continue this process until the
size of inadmissible blocks reaches a minimum size $n_{\text{min}} \times n_{\text{min}}$. 
This will impose a condition on the cardinality of $\tau$:
$$
\# \tau \ge n_{\text{min}}.
$$
Indeed, we want all sub-block leaves of the tree not to be smaller
than a minimum size for the low-rank approximation of sub-blocks to be
favorable compared to a full-rank computation in terms of complexity. 
It is suggested that we take $n_{\text{min}} =32$ in order to avoid the overhead caused by
recursions; see \cite{Hackbusch:15}. 
%
Eventually, we
collect all leaves of the tree and split them into two partitions: 
\begin{itemize}
\item $P_{\text{admissible}}$: a partition of admissible blocks

\item $P_{\text{inadmissible}}$: a partition of inadmissible blocks
\end{itemize}
Note that by this construction, all blocks in $P_{\text{inadmissible}}$
will be of size $n_{\text{min}} \times n_{\text{min}}$ and are hence
small. Such blocks will be treated as full-rank matrices, while the
admissible blocks will be treated as low-rank matrices. Figure
\ref{fig_partition} shows a schematic representation of the
hierarchical block structure obtained by a quadtree of depth five, that is,
five recursive levels of splitting is performed to obtain the block
structure. Gray blocks are admissible, while red blocks
are inadmissible.
\begin{figure}[!h]
\begin{center}
\includegraphics[width=0.5\linewidth]{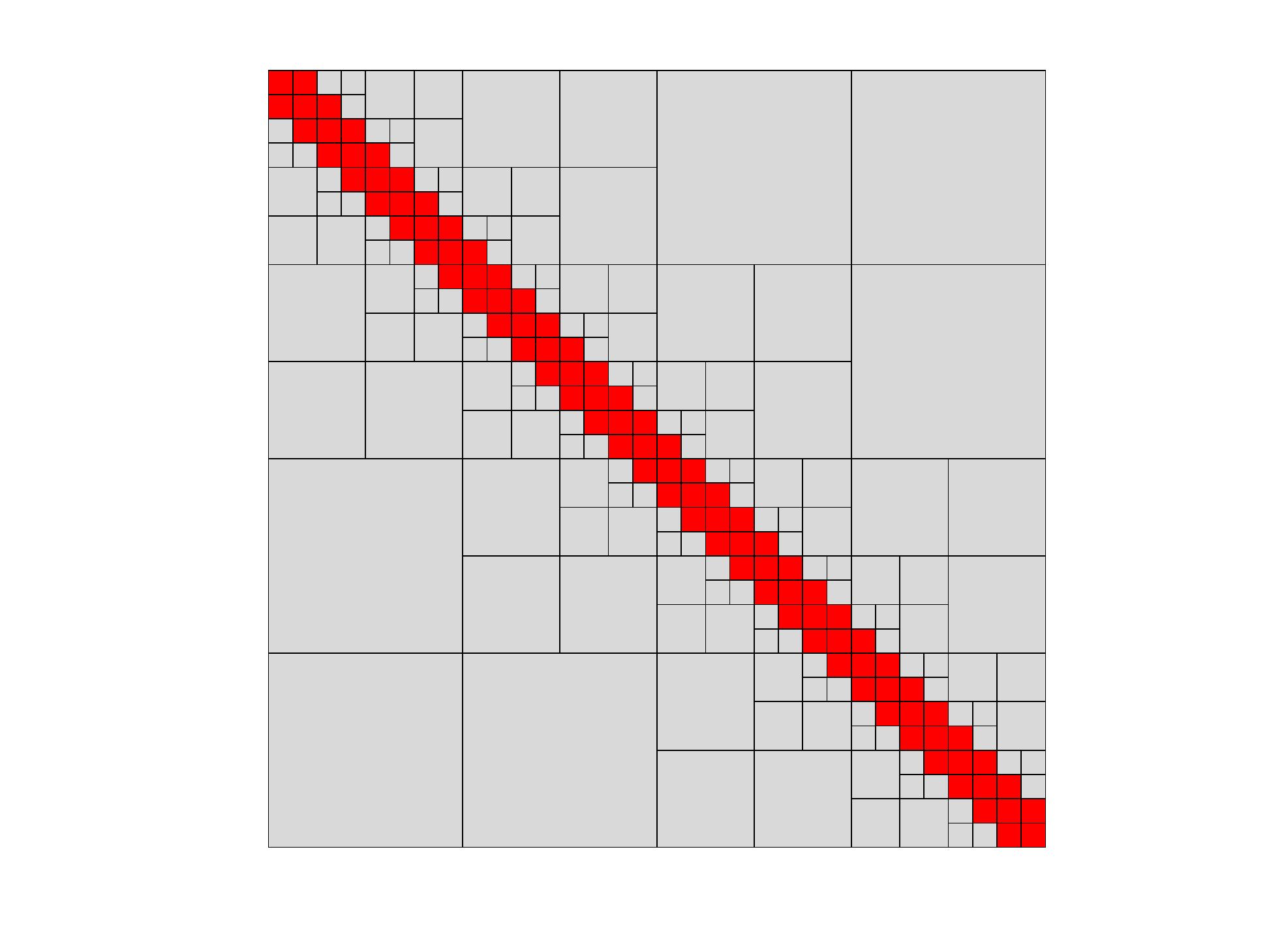}     
\caption{A schematic representation of the hierarchical block
  structure based on a quadtree of depth five. Matrix blocks are
  partitioned into two admissible $P_{\text{admissible}}$ (gray) and
  inadmissible $P_{\text{inadmissible}}$ (red) sets.}
\label{fig_partition}
\end{center}
\end{figure}
%


\medskip
\noindent
{\bf II. Low-rank approximation by separable expansions.} 
Locally on each admissible block $A\arrowvert_{\tau \times \sigma} \in
{\mathbb R}^{b \times b}$, with $b = \# \tau = \# \sigma$, we approximate the kernel $\kappa$ by a separable
expansion, resulting in a low-rank approximation of the
block. Precisely, we approximate $\kappa(x,y)$, where $x \in [\min_{i
  \in {\tau}} x_i , \max_{i \in {\tau}} x_i]$ and $y \in [\min_{j
  \in {\sigma}} y_j , \max_{j \in {\sigma}} y_j]$, using barycentric Lagrange
interpolation in $x$ on a set of $r \in {\mathbb N}$ Chebyshev points $\{ \hat{x}_k
\}_{k=1}^r \in [\min_{i \in {\tau}} x_i , \max_{i \in {\tau}} x_i]$,
where $r \le b$ is a number to be determined; see Section \ref{sec:accuracy}. We write
$$
\kappa(x,y) \approx \kappa^{(r)}(x,y) := \ell(x) \sum_{k=1}^r
\frac{w_k}{x - \hat{x}_k} \,
\kappa(\hat{x}_k,y), \qquad \ell(x) := \prod_{k=1}^r (x-\hat{x}_k),
\qquad w_k = 1/\ell'(\hat{x}_k).
$$
%
%
Due to the separability of the kernel estimator $\kappa^{(r)}$ in
$(x,y)$ variables, 
the matrix block $A\arrowvert_{\tau \times \sigma}$ can be approximated by a rank-$r$ matrix:
$$
A\arrowvert_{\tau \times \sigma} \approx A^{(r)}\arrowvert_{\tau
  \times \sigma} := L R^{\top}, \qquad L, \, R \in {\mathbb R}^{b
  \times r}, 
$$
where 
$$
L= [L_{ik}]=[\ell(x_i)  \frac{w_k}{x_i -\hat{x}_k}], \qquad
R=[R_{jk}]=[\kappa(\hat{x}_k,y_j)], \quad i \in \tau, \quad j\in \sigma, \quad k \in I_r.
$$
The number of chebyshev points $r$ is hence referred to as the local
rank. 

Pre-computing the barycentric weights $\{ w_1, \dotsc, w_r\}$, we
note that the cost of computing each row of $L$ and $R$
(for a fixed $i \in \tau$ and a fixed $j \in \sigma$ and for all $k=1, \dotsc, r$) is
${\mathcal O}(r)$. It therefore takes ${\mathcal O}(r b)$
floating-point arithmetic operations to form $L$ and $R$. 
Hence, the total cost of a matrix-vector multiplication
$A^{(r)}\arrowvert_{\tau \times \sigma} {\bf w} = L R^{\top} {\bf w}$,
where ${\bf w} \in {\mathbb R}^{b \times 1}$, is ${\mathcal O}(r b)$,
where we first compute $\hat{\bf w} := R^{\top} {\bf w}$ and then $L
\hat{\bf w}$. 

It is to be noted that only admissible blocks are approximated by
low-rank matrices. All inadmissible blocks will be treated as
full-rank $n_{\text{min}} \times n_{\text{min}}$ matrices
whose components are directly given by the kernel $\kappa$.

\medskip
\noindent
{\bf III. Loop over all admissible and inadmissible blocks.} By the
above procedure, we have approximated $A$ by a hierarchical matrix
$A_{\text{H}}$ whose blocks are either of rank at most $r$ or of small
size $n_{\text{min}} \times n_{\text{min}}$. We then assemble all low-rank and full-rank block matrix-vector
multiplications and approximate ${\bf s} = A {\bf z}$ by ${\bf s}_{\text{H}} = A_{\text{H}} {\bf z}$, as follows. 
We first set ${\bf s}_{\text{H}}={\bf 0}$, and loop over all blocks $A\arrowvert_{\tau \times \sigma}$. Let
${\bf s}_{\text{H}}\arrowvert_{\tau}$ and ${\bf z}\arrowvert_{\sigma}$ denote the
parts of vectors ${\bf s}_{\text{H}}$ and ${\bf z}$ corresponding
to the index sets $\tau$ and $\sigma$, respectively. We then proceed
as follows:
\begin{itemize}
\item if $A\arrowvert_{\tau \times \sigma} \in P_{\text{admissible}}$,
  then $A_{\text{H}}\arrowvert_{\tau \times \sigma} =L R^{\top}$ and ${\bf s}_{\text{H}}\arrowvert_{\tau} = {\bf s}_{\text{H}}\arrowvert_{\tau} + L R^{\top}
{\bf z}\arrowvert_{\sigma}$.

\item if $A\arrowvert_{\tau \times \sigma} \in P_{\text{inadmissible}}$,
then ${\bf s}_{\text{H}}\arrowvert_{\tau} = {\bf s}_{\text{H}}\arrowvert_{\tau} + A\arrowvert_{\tau \times \sigma}
{\bf z}\arrowvert_{\sigma}$.
\end{itemize}

\subsubsection{Accuracy and adaptive selection of local ranks}
 \label{sec:accuracy}

We will now discuss the error in the approximation of the kernel matrix $A =
[\kappa(x_i, y_j)] \in
{\mathbb R}^{n_k \times n_k}$ by the hierarchical matrix $A_{\text{H}}$. We will consider the error in Frobenius norm $|| A - A_{\text{H}}
||_{\text{F}}$, where 
$$
|| A ||_{\text{F}}^2 := \sum_{i,j=1}^{n_k} |A_{ij}|^2. 
$$
It is to be noted that other matrix norms, such as
row-sum norm $|| . ||_{\infty}$ associated with the maximum vector norm
and spectral norm $||.||_2$ associated with the Euclidean vector norm,
can similarly be considered. 

We first write the global error as a sum of all local errors \cite{Hackbusch:15}
\begin{equation}\label{global_error}
|| A - A_{\text{H}}||_{\text{F}}^2 = \sum_{\tau \times \sigma \in P_{\text{admissible}}} || A\arrowvert_{\tau \times \sigma}-A_{\text{H}}\arrowvert_{\tau \times \sigma}||_F^2,
\end{equation}
where for each block $\tau \times \sigma \in P_{\text{admissible}}$, the
local error in approximating $A\arrowvert_{\tau \times \sigma} \in
{\mathbb R}^{b \times b}$, with $b = \# \tau = \# \sigma$, reads
\begin{equation}\label{local_error}
|| A\arrowvert_{\tau \times \sigma}-A_{\text{H}}\arrowvert_{\tau
  \times \sigma}||_F^2 = \sum_{i,j = 1}^b | \kappa(x_i,y_j) -
\kappa^{(r)}(x_i,y_j) |^2.
\end{equation}
Each term in the right hand side of \eqref{local_error} is given by the interpolation error (see e.g. \cite{Hackbusch:15}),
$$
\vert \kappa(x,y) - \kappa^{(r)}(x,y) \vert = \frac{| \omega(x) |}{r !} \,
\vert \partial_x^r \kappa(\xi,y) \vert, \qquad  \xi \in  [\min_{i \in
  {\tau}} x_i , \max_{i \in {\tau}} x_i], 
$$
where
$$
\omega(x) = \prod_{k=1}^r (x-\hat{x}_k) \le \Bigl( \frac{\text{diam}
  (\tau)}{4} \Bigr)^r,
$$
and the $x$-derivatives of $\kappa$ satisfies the estimate
\eqref{Asym_Smooth_kernel}, thanks to the asymptotic smoothness of
$\kappa$. 
We further note that for $x \in [\min_{i
  \in {\tau}} x_i , \max_{i \in {\tau}} x_i]$ and $y \in [\min_{j
  \in {\sigma}} y_j , \max_{j \in {\sigma}} y_j]$, we have 
$$
|x-y| \ge \text{dist}(\tau, \sigma).
$$
Hence, assuming $r+s \ge 0$, we get
$$
\vert \kappa - \kappa^{(r)} \vert \le c_0 \, r^{\beta} \, \text{dist}(\tau,
\sigma)^{-s} \, \left( \frac{\alpha}{4} \, \eta \right)^r =: c \, \left( \frac{\alpha}{4} \, \eta \right)^r,
$$
which implies local spectral convergence provided $\eta < 4 /
\alpha$. This can be achieved if we take $\eta_0 < 4/\alpha$ in the admissibility
condition \eqref{admissibility}. For instance, in the numerical
examples in Section \ref{sec:Numerics} we take $\eta_0 =2/\alpha$.

Importantly, as we will show here, the above global and local estimates provide a simple strategy
for a priori selection of local ranks, i.e. the number of Chebyshev
points for each block, to achieve a desired small tolerance $\varepsilon_{\tiny{\text{TOL}}} >
0$ for the global error, 
\begin{equation}\label{global_tol}
|| A - A_{\text{H}}||_{\text{F}} \le \varepsilon_{\tiny{\text{TOL}}}.
\end{equation}
We first observe that \eqref{global_tol} holds if
$$
\vert \kappa - \kappa^{(r)} \vert \le \varepsilon_{\tiny{\text{TOL}}} / n_k.
$$
This simply follows by \eqref{global_error}-\eqref{local_error} and
noting that $\sum_{\tau \times \sigma \in P_{\text{admissible}}} b^2 <
n_{k}^2$. Hence, we can achieve \eqref{global_tol} if we set 
$$
c \, \left( \frac{\alpha}{4} \, \eta \right)^r \le
\varepsilon_{\tiny{\text{TOL}}} / n_k.
$$
We hence obtain an optimal local rank
\begin{equation}\label{optimal_local_rank}
r = \Biggl\lceil   \frac{\log(\varepsilon_{\tiny{\text{TOL}}}/(c \, n_k)) }{\log(\alpha
  \, \eta / 4)} \Biggr\rceil.
\end{equation}
We note that each block may have a different $\eta$ and hence a
different local rank $r$. We also note that the optimal local rank
depends logarithmically on $n_k$.

\subsubsection{Computational cost}
\label{sec:total-cost}

We now discuss the overall complexity of the proposed strategy (steps 1
and 2) applied to each iteration of Sinkhorn's algorithm.

As discussed earlier in Section \ref{sec:part1}, the complexity of the original $d$-dimensional problems $Q {\bf
  w}$ and $\hat{Q} {\bf w}$ reduces to several smaller one-dimensional
problems of either $Q^{(k)} {\bf z}$ or $\hat{Q}^{(k)} {\bf
  z}$ form, given by \eqref{cost_step1}. 
We now study the complexity of computing one-dimensional
problems to obtain $\text{cost}(Q^{(k)} {\bf z})$. 
We follow \cite{Hackbusch:15} and assume $n_k = 2^q$, where $q$ is a
non-negative integer that may be different for different $k$. 
Let ${\mathcal H}_q$ denote the family of $2^q \times 2^q$
hierarchical matrices constructed above. 
Corresponding to the quadtree block
structure, the representation of ${\mathcal H}_q$ for $q \ge 1$ can be given recursively,
\begin{equation}\label{recursive_forms} 
{\mathcal H}_q = 
\left(\begin{array}{@{}c|c@{}}
  {\mathcal H}_{q-1}
  & {\mathcal N}_{q-1}\\
\hline
  {\mathcal N}_{q-1}^*
& {\mathcal H}_{q-1}
\end{array}\right),
\qquad 
{\mathcal N}_q = 
\left(\begin{array}{@{}c|c@{}}
  {\mathcal R}_{q-1}
  & {\mathcal R}_{q-1}\\
\hline
  {\mathcal N}_{q-1}
& {\mathcal R}_{q-1}
\end{array}\right),
\end{equation}
where ${\mathcal R}_{q}$ is the family of rank-$r$ square matrices of size $2^q
\times 2^q$. Note that for $q=0$, the matrices ${\mathcal H}_q$,
${\mathcal N}_q$, ${\mathcal R}_q$ are $1 \times 1$
matrices and hence are simply scalars.

Now let $W_{{\mathcal H}}(q)$ be the cost of multiplying ${\mathcal H}_q$ by a vector
${\bf z}$. The recursive forms \eqref{recursive_forms} imply
$$
W_{{\mathcal H}}(q) = 2 \, W_{{\mathcal H}}(q-1) + 2 \, W_{{\mathcal N}}(q-1), \qquad
W_{{\mathcal N}}(q) = W_{{\mathcal N}}(q-1) + 3 \, W_{{\mathcal R}}(q-1),
$$
where $W_{{\mathcal N}}(q)$ and $W_{{\mathcal R}}(q)$ are the costs of
multiplying a vector by ${\mathcal N}_q$ and ${\mathcal R}_q$,
respectively. 
We first note that since ${\mathcal R}_q$ is a rank-$r$ matrix, we have  
$$
 W_{{\mathcal R}}(q) = c \, r \, 2^q, \qquad c >0.
$$ 
We can then solve the recursive equation $W_{{\mathcal N}}(q) =
W_{{\mathcal N}}(q-1) + 3 \, c \, r \, 2^{q-1}$ with $W_{{\mathcal N}}(0)
= 1$ to get
$$
W_{{\mathcal N}}(q) = 3 \, c \, r \, (2^q - 1) + 1 \sim 3 \, c \, r \, 2^q.
$$
We finally solve the recursive equation $W_{{\mathcal H}}(q) = 2 \,
W_{{\mathcal H}}(q-1) + 6 \, c \, r \, 2^{q-1}$ with $W_{{\mathcal H}}(0)
= 1$ to obtain
$$
W_{{\mathcal H}}(q) = (1 + 3 \, c \, r \, q ) \, 2^q.
$$
If we choose the local ranks $r$ according to
\eqref{optimal_local_rank}, then $r = {\mathcal O}(\log
n_k)$, and hence the cost of computing the one-dimensional problems
$Q^{(k)}{\bf z}$ is
\begin{equation}\label{cost_step2}
\text{cost}(Q^{(k)}{\bf z})= {\mathcal
  O}(n_k \log^2 n_k).
\end{equation}
%
%
Combining \eqref{cost_step1} and \eqref{cost_step2}, we obtain
$$
\text{cost}(Q {\bf w})= \sum_{k=1}^d \tilde{n}_k \, {\mathcal
  O}(n_k \log^2 n_k) = {\mathcal
  O}(n \sum_{k=1}^d \log^2 n_k ) \le {\mathcal
  O}(n (\sum_{k=1}^d \log n_k)^2 ) ={\mathcal
  O}(n \log^2 n ).
$$

We note that with such a reduction in the complexity of each Sinkhorn iteration, 
the overall cost of computing Sinkhorn divergence after $K = {\mathcal
  O}(\log n)$ iterations will reduce to ${\mathcal O}(n \, \log^3 n)$.

\subsection{Algorithm}
\label{sec:algorithm}

Algorithm \ref{ALG} outlines the hierarchical low-rank computation of
Sinkhorn divergence. 
\begin{algorithm}[!ht]
\caption{ Hierarchical low-rank computation of Sinkhorn divergence}
\label{ALG}
\begin{algorithmic} 
\medskip
\STATE {\bf Input:} 
\hskip 0.2cm $\bullet$ cost matrix $C \in {\mathbb
R}_+^{n \times n}$ generated
by metric functions $c_k:{\mathbb R} \times {\mathbb R} \rightarrow {\mathbb
  R}_+$, $k=1, \dotsc, d$

\hskip 1.67cm $\bullet$ probability vectors ${\bf f}, {\bf g}
\in \Sigma_n$ supported on two bounded subsets of ${\mathbb R}^d$

\hskip 1.67cm $\bullet$ tolerances $\varepsilon_{\text{TOL}}$ and $\varepsilon_{\text{S}}$
 
\medskip
\STATE {\bf Output:} \hskip 0.2cm Sinkhorn divergence $S_{p, \lambda}
({\bf f}, {\bf g})$

\medskip
\medskip
\STATE {\bf 1.} Select $\lambda>0$; see Remark \ref{remark:lambda}.

\medskip
\STATE {\bf 2.} For every $k=1, \dotsc, d$, construct two hierarchical
matrices (within tolerance $\varepsilon_{\text{TOL}}$):
$$
Q_{\text{H}}^{(k)} \approx Q^{(k)} =[\exp(- \lambda \, c_k)], \qquad 
\hat{Q}_{\text{H}}^{(k)} \approx \hat{Q}^{(k)} = [c_k \, \exp(- \lambda \, c_k)].
$$

\STATE {\bf 3.} Implement two procedures, following steps 1-2 in Section \ref{sec:H-Sinkhorn}:

\vskip 0.2cm
\hskip 1cm Procedure 1. \ \  ${\bf s}$=MVM1($Q_{\text{H}}^{(1)}, \dotsc, Q_{\text{H}}^{(d)},
{\bf z}$) \ \ \ producing the result ${\bf s} \approx Q \, {\bf z}$

\vskip 0.2cm
\hskip 1cm Procedure 2. \ \  ${\bf s}$=MVM2($Q_{\text{H}}^{(1)}, \dotsc,
Q_{\text{H}}^{(d)},\hat{Q}_{\text{H}}^{(1)}, \dotsc,
\hat{Q}_{\text{H}}^{(d)}, {\bf z}$) \ \ \ producing the result ${\bf s} \approx \hat{Q} \, {\bf z}$

\vskip 0.1cm
\medskip 
\STATE {\bf 4.} Sinkhorn's loop:

\vskip 0.1cm
\hskip 1cm ${\bf u}^{(0)} = \mathds{1}_n /n$

\vskip 0.1cm
\hskip 1cm ${\bf v}^{(0)} =  {\bf g} \, \oslash \, \text{MVM1}(Q_{\text{H}}^{(1)}, \dotsc, Q_{\text{H}}^{(d)},
{\bf u}^{(0)})$

\vskip 0.1cm
\hskip 1cm $i=0$

\vskip 0.15cm
\hskip 1cm {\bf while} \ $\langle$ stopping criterion
\eqref{stop_criterion} does not hold
$\rangle$ \ {\bf do}

\vskip 0.1cm
\hskip 1.8cm ${\bf u}^{(i+1)} =  {\bf f} \, \oslash \, \text{MVM1}(Q_{\text{H}}^{(1)}, \dotsc, Q_{\text{H}}^{(d)},
{\bf v}^{(i)})$

\vskip 0.1cm
\hskip 1.8cm ${\bf v}^{(i+1)} =  {\bf g} \, \oslash \, \text{MVM1}(Q_{\text{H}}^{(1)}, \dotsc, Q_{\text{H}}^{(d)},
{\bf u}^{(i)})$

\vskip 0.1cm
\hskip 1.8cm $i = i+1$

\vskip 0.1cm
\hskip 1cm {\bf end while} 

\vskip 0.15cm
\hskip 1cm ${\bf s} = \text{MVM2}(Q_{\text{H}}^{(1)}, \dotsc,
Q_{\text{H}}^{(d)},\hat{Q}_{\text{H}}^{(1)}, \dotsc, \hat{Q}_{\text{H}}^{(d)}, {\bf v}^{(i)})$ 

\vskip 0.1cm
\hskip 1cm $S_{\lambda,p} = \left(   {\bf s}^{\top}  {\bf u}^{(i)} \right)^{1/p}$

\medskip
\end{algorithmic}
\end{algorithm}

\section{Numerical illustration}
\label{sec:Numerics}

In this section we will present a numerical example to verify the theoretical and
numerical results presented in previous sections. In particular, we will
consider an example studied in
\cite{Motamed_Appelo:19} and demonstrate the applicability of Sinkhorn
divergence and the proposed algorithm in solving optimization
problems. 

\medskip
\noindent
{\bf Problem statement.} 
Let ${\bf f} = [f(x_i)] \in {\mathbb R}^n$ be a three-pulse signal given at $n$ discrete
points $x_i = (i-1)/(n-1)$, with $i=1, \dotsc, n$, where
\[
f(x) =  e^{-\left( \frac{x-0.4}{\sigma}\right)^2} 
- e^{-\left( \frac{x-0.5}{\sigma}\right)^2} 
+e^{-\left( \frac{x-0.6}{\sigma}\right)^2}, \qquad x\in[0,1].
\] 
Here, $\sigma >0$ is a given positive constant that controls the
spread of the three pulses; the smaller $\sigma$, the narrower the three
pulses. 
Let further ${\bf g}(s)=[g(x_i; s)] \in {\mathbb R}^n$ be a shifted version of ${\bf
  f}$ with a shift $s$ given at the same discrete points $x_i =  (i-1)/(n-1) $, with $i=1, \dotsc, n$, where
\[
g(x; s) =  e^{-\left( \frac{x-s-0.4}{\sigma}\right)^2} 
- e^{-\left( \frac{x-s-0.5}{\sigma}\right)^2} 
+e^{-\left( \frac{x-s-0.6}{\sigma}\right)^2}, \qquad x\in[0,1].
\] 
Assuming $-0.3 \le s \le 0.3$, our goal is to find the ``optimal'' shift $s^*$ that minimizes the ``distance''
between the original signal ${\bf f}$ and the shifted signal ${\bf
  g}(s)$,
$$
s^* = \argmin_{s\in[-0.3,0.3]} \text{d}({\bf f}, {\bf g}(s)).
$$
Here, $\text{d}(.,.)$ is a ``loss'' function that measures dissimilarities between ${\bf f}$ and ${\bf g}(s)$; see the
discussion below. While being a simple problem with a known
solution $s^* = 0$, this model problem serves as an
illustrative example exhibiting important challenges that we may face in
solving optimization problems. 
Figure \ref{fig:signals} shows the original signal ${\bf f}$ and two shifted signals ${\bf
    g}(s)$ for two different shifts $s=-0.3$ and $s=0.3$. Wide signals
  (left) correspond to width $\sigma=0.05$, and narrow signals (right) correspond to width $\sigma=0.01$.
\begin{figure}[!ht]
\begin{center}
\includegraphics[width=0.45\textwidth]{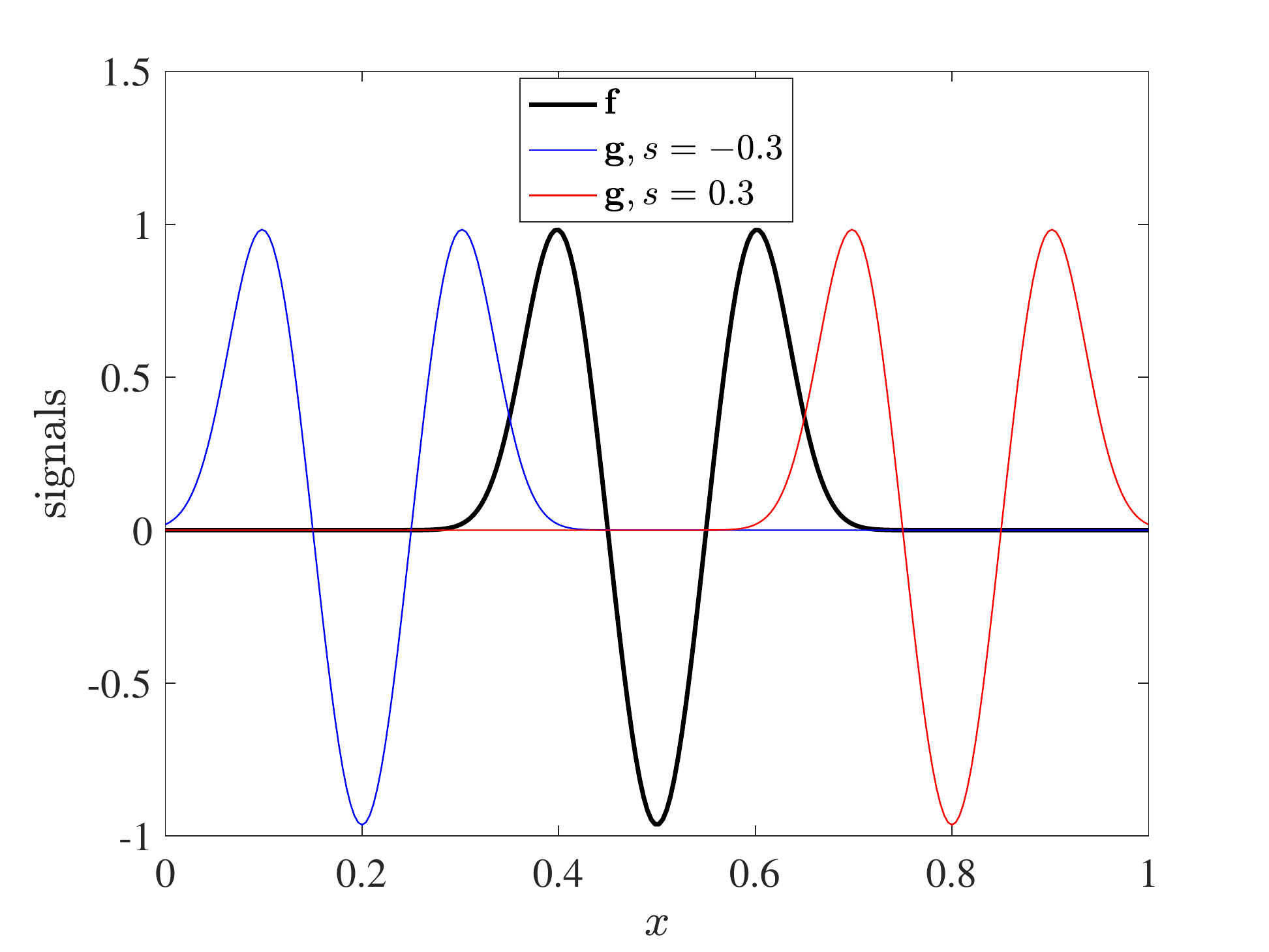}      
\includegraphics[width=0.45\textwidth]{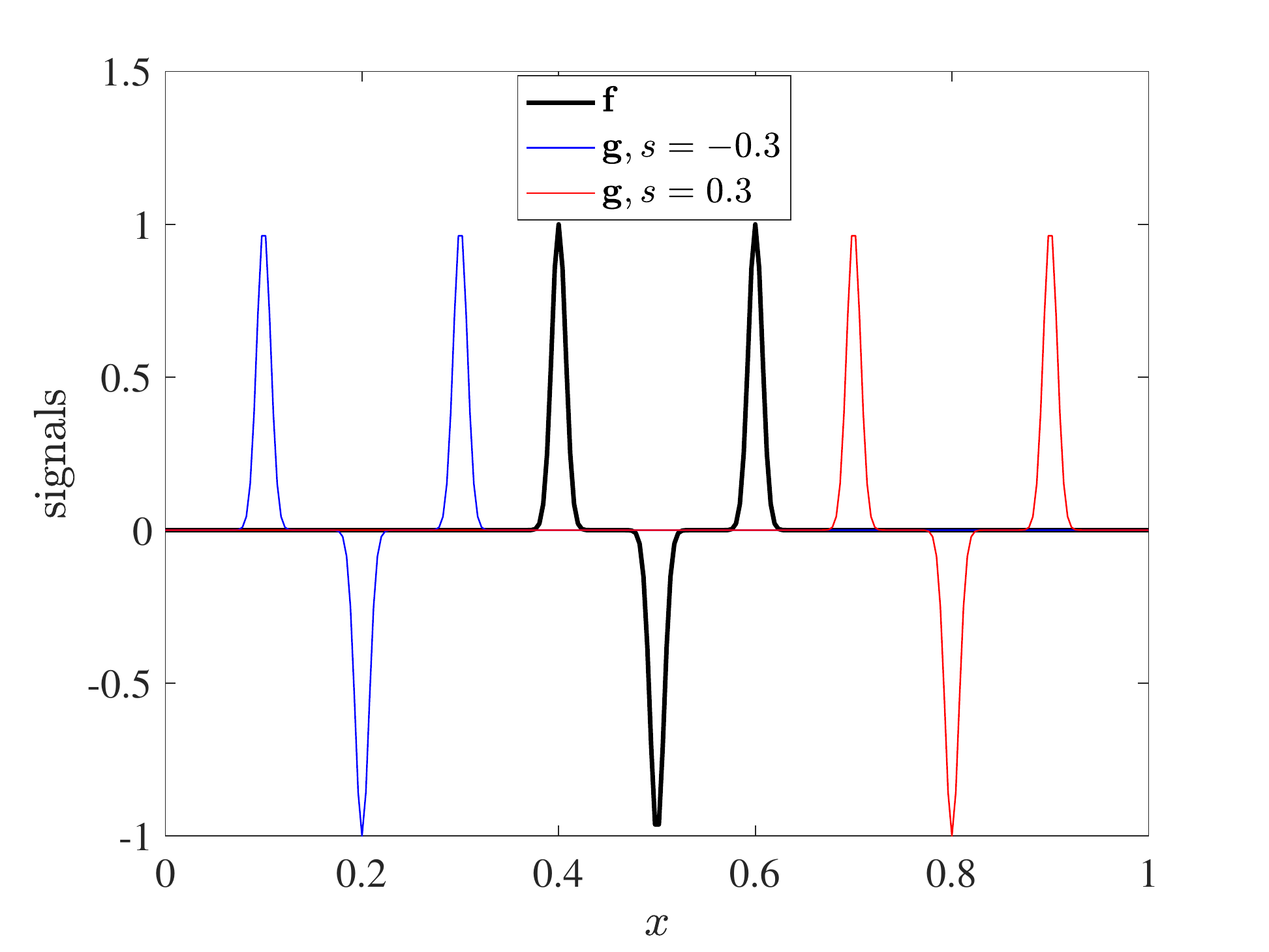}     
\caption{The original signal ${\bf f}$ and the shifted signal ${\bf
    g}(s)$ for two different shifts $s=-0.3$ and $s=0.3$. 
Left figure shows wide signals ($\sigma = 0.05$), and right figure shows
narrow signals ($\sigma = 0.01$).}  
\label{fig:signals}
\end{center}
\end{figure}

\medskip
\noindent
{\bf Loss functions.} 
A first step is tackling an optimization problem is to select an appropriate
loss function. Here, we consider and compare three loss functions:
\begin{itemize}
\item Euclidean distance (or $L^2$ norm): $\text{d}_{\text{E}}({\bf f}, {\bf g}(s)) := ||  {\bf f} - {\bf g}(s)
  ||_2$.

\item Quadratic Wasserstein distance: $\text{d}_{\text{W}}({\bf
    f}, {\bf g}(s))$ defined in \eqref{d_W}.

\item Quadratic Sinkhorn divergence: $\text{d}_{\text{S}}({\bf f}, {\bf
    g}(s))$ defined in \eqref{d_S}.
\end{itemize}
It is to be noted that since the two signals are not necessarily
probability vectors, i.e. positivity and mass balance are not
expected, the signals need to be pre-processed before we can measure
their Wasserstein distance and Sinkhorn divergence. While there are
various techniques for this, here we employ a sign-splitting and
normalization technique that considers the non-negative parts ${\bf f}^+$ and
${\bf g}^+$ and the non-positive parts ${\bf f}^-$ and ${\bf g}^-$ of
the two signals separately. Precisely, we define
\begin{equation}\label{d_W}
d_{\text{W}}({\bf f}, {\bf g}(s)) := W_2 \left( \frac{{\bf f}^+}{{\bf f}^{+\top}
    \mathds{1}}, \frac{{\bf g}(s)^+}{{\bf g}(s)^{+\top}
    \mathds{1}} \right) + W_2 \left( \frac{{\bf f}^-}{{\bf f}^{-\top}
    \mathds{1}}, \frac{{\bf g}(s)^-}{{\bf g}(s)^{-\top}
    \mathds{1}} \right),
\end{equation}
\begin{equation}\label{d_S}
d_{\text{S}}({\bf f}, {\bf g}(s)) := S_{2,  \lambda} \left( \frac{{\bf f}^+}{{\bf f}^{+\top}
    \mathds{1}}, \frac{{\bf g}(s)^+}{{\bf g}(s)^{+\top}
    \mathds{1}} \right) + S_{2,  \lambda} \left( \frac{{\bf f}^-}{{\bf f}^{-\top}
    \mathds{1}}, \frac{{\bf g}(s)^-}{{\bf g}(s)^{-\top}
    \mathds{1}} \right). 
\end{equation} 
Here, $W_2$ and $S_{2,  \lambda} $ are respectively given by \eqref{W}
and \eqref{S} with $p=2$.

Two desirable properties of a loss function include its convexity with
respect to the unknown parameter (here $s$) and its efficient computability
for many different values of $s$. 
While the Euclidean distance $d_{\text{E}}$ is widely used and can be
efficiently computed with a cost ${\mathcal O}(n)$, it may generate multiple local extrema that could result in
converging to wrong solutions. The Wasserstein
metric, on the other hand, features better convexity and hence better convergence to
correct solutions; see \cite{Motamed_Appelo:19} and references there
in. 
However, the application of Wasserstein metric is
limited to low-dimensional signals due to its high computational cost and infeasibility in high
dimensions. This makes the Wasserstein loss function $\text{d}_{\text{W}}$ particularly
suitable for one-dimensional problems, where it can be efficiently computed with a cost
${\mathcal O}(n \, \log n)$, as discussed in Section \ref{sec:Wass}. 
As a promising alternative to
$\text{d}_{\text{E}}$ and $\text{d}_{\text{W}}$, we may consider the
Sinkhorn loss function $\text{d}_{\text{S}}$ that possesses both desirable properties of a loss function. In terms of convexity,
Sinkhorn divergence is similar to the Wasserstein metric. In terms
of computational cost, as shown in the present work, we can
efficiently compute Sinkhorn divergence in multiple dimensions with a
near-linear cost ${\mathcal O}(n \, \log^3 n)$.

\medskip
\noindent
{\bf Numerical results and discussion.} 
Figure \ref{fig:distances_comp} shows the loss
functions ($d_{\text{E}}, d_{\text{W}}, d_{\text{S}}$) versus $s$. Here, we set $n=2^{12}$ and consider two different values for the width of signals,
$\sigma = 0.05$ (left figure) and $\sigma = 0.01$ (right figure). 
We compute the Sinkhorn loss function with $\lambda = 50$ both by the
original Sinkhorn's algorithm (labeled $d_{\text{S}}$ in the figure) and
by the proposed fast algorithm outlined in Algorithm \ref{ALG} (labeled $d_{\text{S}}^{\text{H}}$ in
the figure). In computing $d_{\text{S}}^{\text{H}}$, we obtain the
optimal local ranks by \eqref{optimal_local_rank}, with parameters
$c=1$ and $\alpha = 2$ that correspond to the $L^p$ cost function with
$p=2$; see the
discussion on $L^p$ cost functions in Section \ref{sec:assumptions}
for this choice of parameters. 
We consider the accuracy tolerances
$\varepsilon_{\tiny{\text{TOL}}} = \varepsilon_{\tiny{\text{S}}}  = 0.01$. 
\begin{figure}[!ht]
\begin{center}
\includegraphics[width=0.42\textwidth]{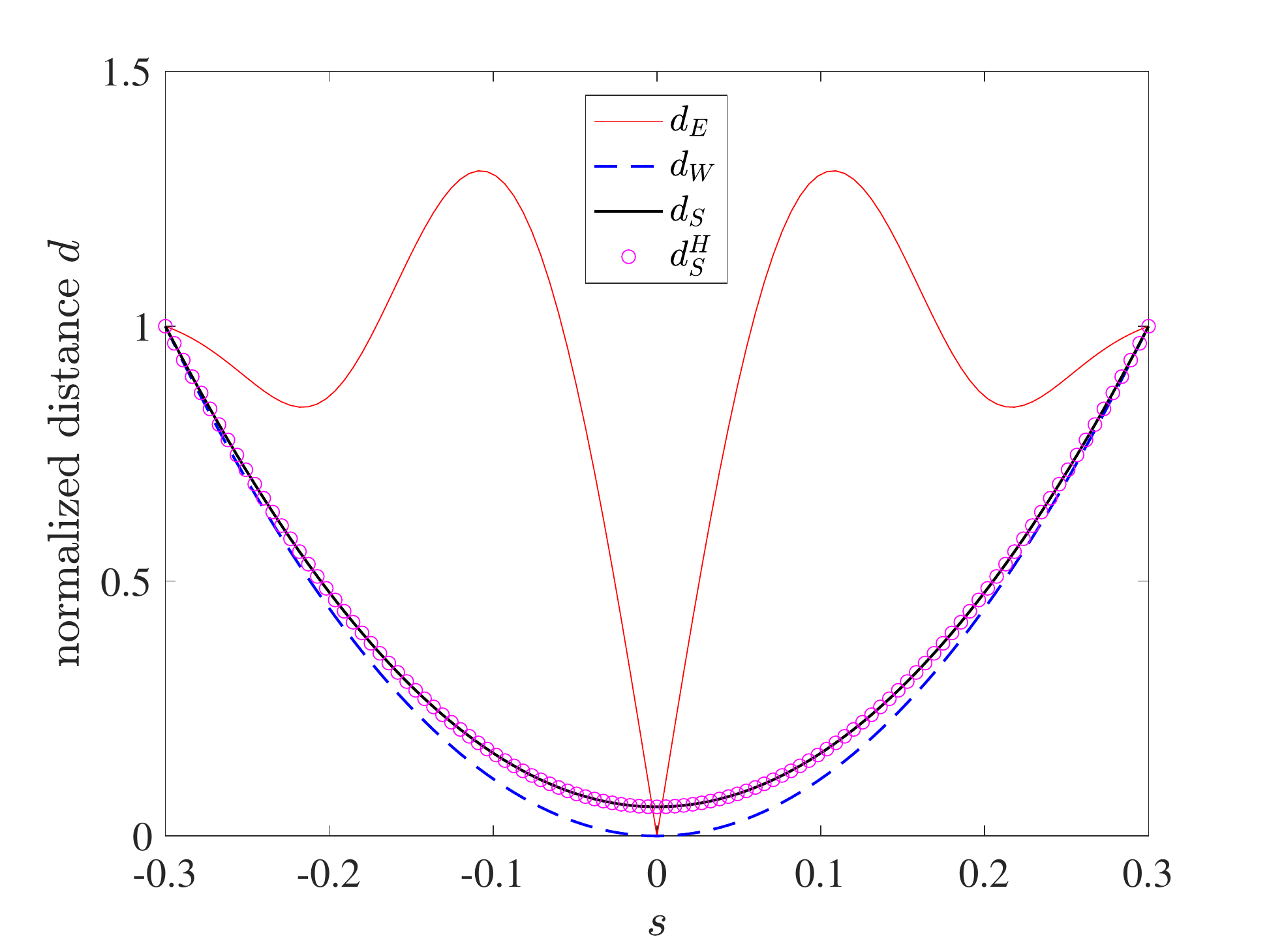}      
\includegraphics[width=0.42\textwidth]{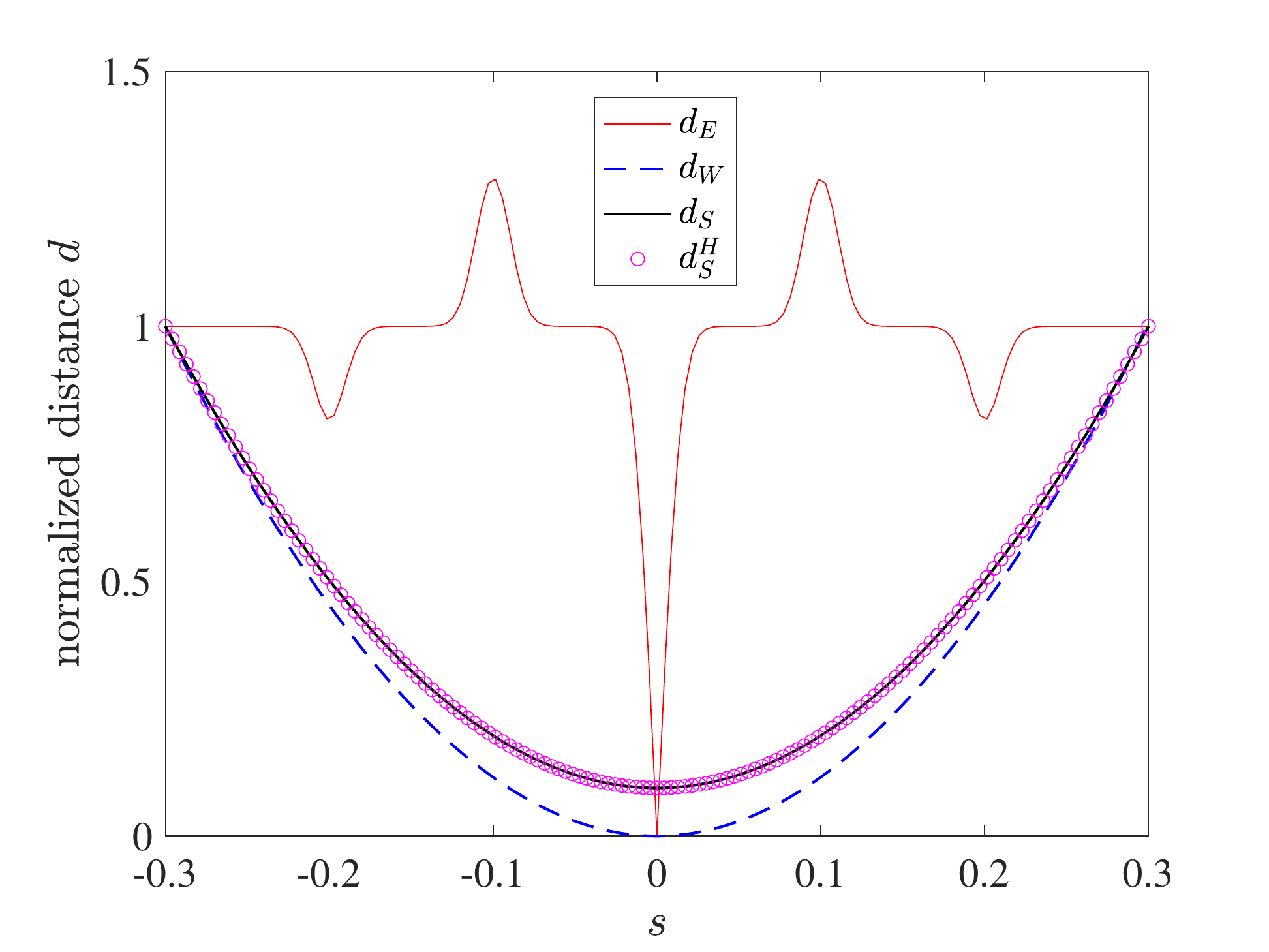}     
\caption{Comparison of different loss functions that measure the
  dissimilarity between ${\bf f}$ and ${\bf g}(s)$ as a function of the shift
$s$. While the Euclidean loss function has local extrema, the
Wasserstein and Sinkhorn loss functions are convex. 
Left figure is for wide signals ($\sigma = 0.05$), and right figure is for
narrow signals ($\sigma = 0.01$).}  
\label{fig:distances_comp}
\end{center}
\end{figure}

As we observe, the Euclidean loss function has local extrema, while the
Wasserstein and Sinkhorn loss functions are convex. 
We also observe that $d_{\text{S}}^{\text{H}}$ well approximates
$d_{\text{S}}$, verifying the accuracy of the proposed algorithm in computing Sinkhorn
divergence.

Figure \ref{fig:cost} shows the CPU time of computing
$d_{\text{S}}^{\text{H}}$ as a function of $n$. We observe that the
cost is ${\mathcal O}(n \, \log^3 n)$ as expected.
\begin{figure}[!h]
\begin{center}
\includegraphics[width=0.5\linewidth]{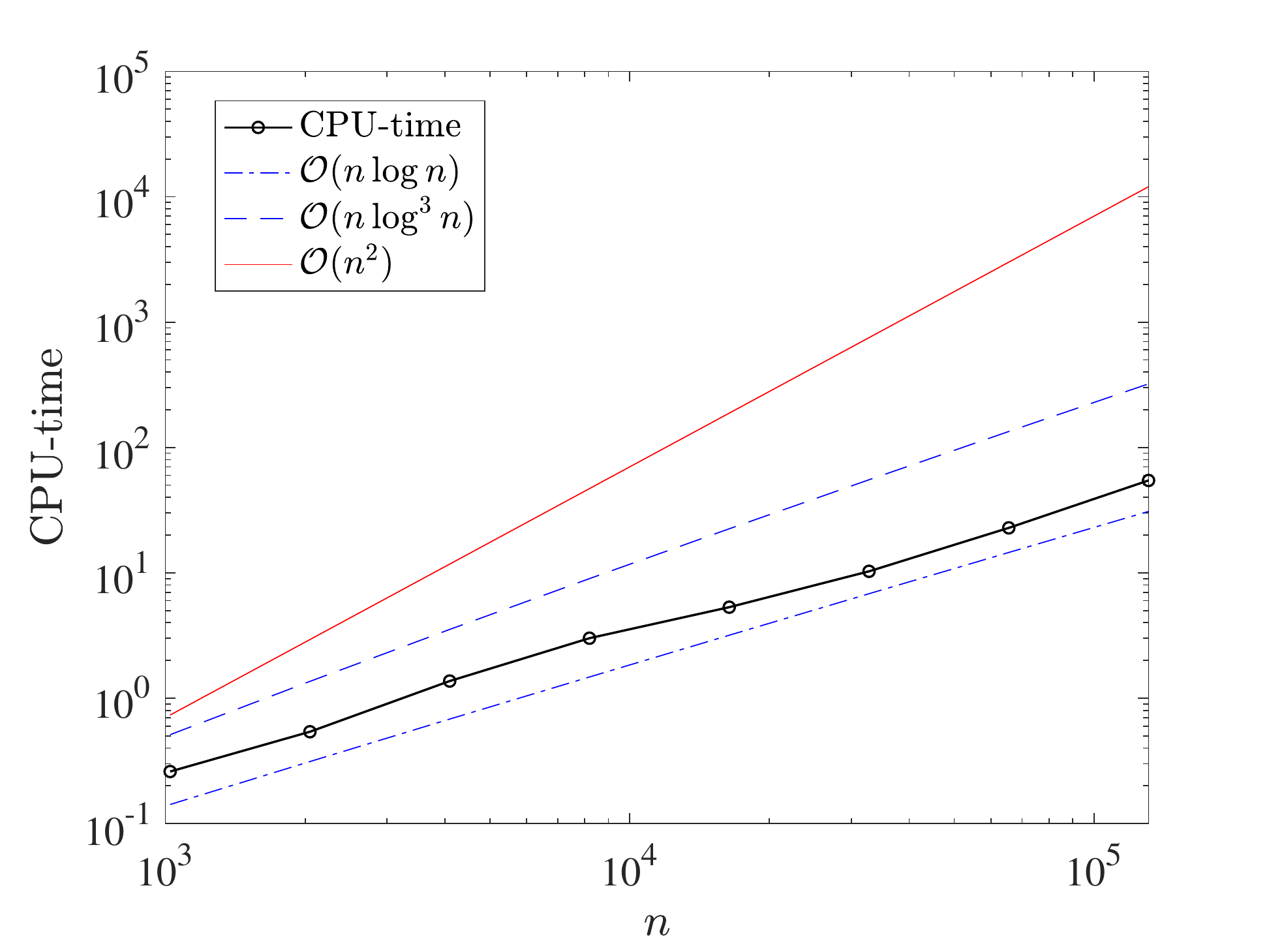}     
\caption{CPU time versus $n$.}
\label{fig:cost}
\end{center}
\end{figure}
%

\section{Conclusion}
\label{sec:CON}

This work presents a fast method for computing Sinkhorn
divergence between two discrete probability vectors supported on
possibly high-dimensional spaces. The cost matrix in optimal transport
problem is assumed to be decomposable into a sum
of asymptotically smooth Kronecker product factors. 
The method combines Sinkhorn's matrix
scaling iteration with a low-rank hierarchical representation of the
scaling matrices to achieve a near-linear complexity. 
This provides a fast and easy-to-implement algorithm for computing
Sinkhorn divergence, enabling the applicability of Sinkhorn divergence to large-scale optimization problems, where the computation of classical
Wasserstein metric is not feasible. 

Future directions include 
%
exploring the application of the proposed
hierarchical low-rank Sinkhorn's algorithm to large-scale Bayesian
inversion and other data science and machine learning problems.


\bigskip
\noindent
{\bf Acknowledgments.} I would like to thank Klas Modin for a question
he raised during a seminar that I gave at Chalmers University of Technology
in Gothenburg, Sweden. Klas' question on the computation of Wasserstein metric in multiple
dimensions triggered my curiosity and 
persuaded me to work further on the
subject. 

\section*{Appendix}
In this appendix we collect a number of auxiliary lemmas.

\begin{lemma}\label{lem1}
For a real matrix $A=[A_{ij}]$, let $\exp[A]$ denote the matrix obtained by the entrywise
application of exponential function to $A$, i.e. $\exp[A] :=
[\exp(A_{ij})]$. Then we have
$$
\exp[C^{(d)} \oplus \dotsi \oplus C^{(1)}] = \exp[C^{(d)}] \otimes
\dotsi \otimes \exp[C^{(1)}].
$$
\end{lemma}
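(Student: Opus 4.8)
The plan is to reduce the $d$-term identity to the two-term base case $\exp[A \oplus B] = \exp[A] \otimes \exp[B]$ for arbitrary square matrices $A \in \mathbb{R}^{p \times p}$ and $B \in \mathbb{R}^{q \times q}$, and then bootstrap to general $d$ by induction. The reduction is legitimate because $\oplus$ is associative (as recorded earlier in the paper) and $\otimes$ is associative by standard matrix algebra, so both sides can be built up one factor at a time.

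For the base case I would argue entrywise. Indexing the rows and columns of a $pq \times pq$ matrix by pairs $(i,k)$ and $(i',l)$, with $i,i' \in \{1,\dotsc,p\}$ and $k,l \in \{1,\dotsc,q\}$ in the usual Kronecker ordering, I first observe that $(A \otimes J_q)_{(i,k),(i',l)} = A_{ii'} \, (J_q)_{kl} = A_{ii'}$ and $(J_p \otimes B)_{(i,k),(i',l)} = (J_p)_{ii'} \, B_{kl} = B_{kl}$, since every entry of the all-ones matrices equals one. Hence the definition $A \oplus B = A \otimes J_q + J_p \otimes B$ gives $(A \oplus B)_{(i,k),(i',l)} = A_{ii'} + B_{kl}$; in other words, the all-ones Kronecker sum is precisely the operation whose entries are the pairwise sums of the entries of its factors. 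Applying the entrywise exponential and using $\exp(a+b) = \exp(a)\exp(b)$ then yields $\exp[A \oplus B]_{(i,k),(i',l)} = \exp(A_{ii'})\exp(B_{kl})$, which is exactly $(\exp[A] \otimes \exp[B])_{(i,k),(i',l)}$. Agreement of all entries gives the matrix identity.

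For the inductive step I would set $M := C^{(d-1)} \oplus \dotsi \oplus C^{(1)}$ and assume $\exp[M] = \exp[C^{(d-1)}] \otimes \dotsi \otimes \exp[C^{(1)}]$. Using associativity of $\oplus$ to regroup $C^{(d)} \oplus \dotsi \oplus C^{(1)} = C^{(d)} \oplus M$, and applying the base case with $A = C^{(d)}$ and $B = M$, I obtain $\exp[C^{(d)} \oplus M] = \exp[C^{(d)}] \otimes \exp[M]$. Substituting the inductive hypothesis and invoking associativity of $\otimes$ then completes the step and the induction.

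The only point requiring care is bookkeeping rather than genuine difficulty. I must keep the Kronecker indexing convention fixed throughout, and I must state the base case for \emph{arbitrary} square matrices rather than only kernel matrices: in the inductive step the second factor $B = M$ is a generic composite matrix of size $\prod_{k=1}^{d-1} n_k$, so the all-ones matrix $J_q$ appearing in $C^{(d)} \oplus M$ must be taken at that composite size. Because the entire argument is purely algebraic and entrywise, no smoothness or metric structure of the $C^{(k)}$ enters, and the identity holds for any real matrices.
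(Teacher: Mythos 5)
Your proposal is correct and follows essentially the same route as the paper: a two-term base case resting on $\exp(a+b)=\exp(a)\exp(b)$ applied entrywise, followed by induction on $d$ via associativity of $\oplus$ and $\otimes$. The only difference is presentational—you verify the base case with Kronecker pair-index bookkeeping where the paper writes out block matrices—and your explicit remark that the base case must hold for an arbitrary second factor (so it applies to the composite matrix $M$ in the inductive step) makes precise a point the paper leaves implicit.
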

\begin{proof}
The case $d=1$ is trivial. Consider the case $d=2$. By the definition
of Kronecker sum, we have
$$
\exp[C^{(2)} \oplus C^{(1)}] = \exp[C^{(2)} \otimes J_1 + J_2 \otimes
C^{(1)}] =
$$

$$
=
\left(\begin{array}{@{}c|c|c@{}}
  \exp(C_{11}^{(2)}) J_1
  & \cdots & \exp(C_{1n_2}^{(2)}) J_1\\
\hline
  \vdots &
  \ddots & \vdots\\
\hline
  \exp(C_{n_2 1}^{(2)}) J_1
& \cdots & \exp(C_{n_2 n_2}^{(2)}) J_1
\end{array}\right)
\odot
\left(\begin{array}{@{}c|c|c@{}}
  \exp[C^{(1)}]
  & \cdots & \exp[C^{(1)}]\\
\hline
  \vdots &
  \ddots & \vdots\\
\hline
  \exp[C^{(1)}]
& \cdots & \exp[C^{(1)}]
\end{array}\right)
=
$$

$$
= 
\left(\begin{array}{@{}c|c|c@{}}
  \exp(C_{11}^{(2)}) \exp[C^{(1)}]
  & \cdots & \exp(C_{1n_2}^{(2)}) \exp[C^{(1)}]\\
\hline
  \vdots &
  \ddots & \vdots\\
\hline
  \exp(C_{n_2 1}^{(2)}) \exp[C^{(1)}]
& \cdots & \exp(C_{n_2 n_2}^{(2)}) \exp[C^{(1)}]
\end{array}\right)
=
$$

$$
= \exp[C^{(2)}] \otimes \exp[C^{(1)}].
$$
Now assume that it is true for $d-1$: 
$$
\exp[C^{(d-1)} \oplus \dotsi \oplus C^{(1)}] = \exp[C^{(d-1)}] \otimes
\dotsi \otimes \exp[C^{(1)}].
$$
Then
$$
\exp[C^{(d)} \oplus \dotsi \oplus C^{(1)}] = \exp[C^{(d)}] \otimes
\exp[C^{(d-1)} \oplus \dotsi \oplus C^{(1)}] =
$$

$$
= \exp[C^{(d)}] \otimes
\dotsi \otimes \exp[C^{(1)}].
$$
\end{proof}

\begin{lemma}\label{lem2}
We have
$$
[C^{(d)} \oplus \dotsi \oplus C^{(1)}] \odot [Q^{(d)} \otimes
\dotsi \otimes Q^{(1)}] =
\sum_{k=1}^d A_k^{(d)} \otimes
\dotsi \otimes A_k^{(1)}, 
\quad 
A_k^{(m)} =
\left\{ \begin{array}{ll}
C^{(m)} \odot Q^{(m)}, &  k = m \\
Q^{(m)}, & k \neq m 
\end{array} \right. 
$$
\end{lemma}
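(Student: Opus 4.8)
The plan is to reduce the identity to two elementary structural facts and then assemble them. First I would expand the all-ones Kronecker sum $C^{(d)} \oplus \dotsi \oplus C^{(1)}$ into an explicit $d$-term sum of Kronecker products. Using the defining relation $A \oplus B = A \otimes J_q + J_p \otimes B$ from Definition \ref{K_sum} together with the associativity noted just after it, an induction on $d$ gives
$$
C^{(d)} \oplus \dotsi \oplus C^{(1)} = \sum_{k=1}^d B_k^{(d)} \otimes \dotsi \otimes B_k^{(1)}, \qquad B_k^{(m)} = \begin{cases} C^{(m)}, & k = m, \\ J_{n_m}, & k \neq m, \end{cases}
$$
so that the $k$-th summand carries $C^{(k)}$ in the $k$-th slot and the all-ones matrix $J_{n_m}$ in every other slot. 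The base case $d=2$ is exactly the definition. For the inductive step I would write $C^{(d)} \oplus M = C^{(d)} \otimes J_{\tilde n} + J_{n_d} \otimes M$ with $M = C^{(d-1)} \oplus \dotsi \oplus C^{(1)}$ and $\tilde n = \prod_{m<d} n_m$; factoring $J_{\tilde n} = J_{n_{d-1}} \otimes \dotsi \otimes J_{n_1}$ supplies the $k=d$ term, while prepending $J_{n_d}$ to the $(d-1)$-term expansion of $M$ supplies the terms $k=1,\dotsc,d-1$.

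The second ingredient is the mixed Hadamard–Kronecker product identity
$$
(A_1 \otimes \dotsi \otimes A_d) \odot (B_1 \otimes \dotsi \otimes B_d) = (A_1 \odot B_1) \otimes \dotsi \otimes (A_d \odot B_d),
$$
valid whenever $A_m$ and $B_m$ share a common shape for each $m$. This I would verify entrywise: indexing rows and columns of each Kronecker product by multi-indices $(i_1,\dotsc,i_d)$ and $(j_1,\dotsc,j_d)$, both sides have the corresponding entry equal to $\prod_{m=1}^d (A_m)_{i_m j_m} (B_m)_{i_m j_m}$, so the identity is trivial at the level of entries and extends from the two-factor case by induction.

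With these two facts in hand the conclusion is a short computation. I would substitute the first expansion into the left-hand side, distribute the Hadamard product over the finite sum, and apply the mixed-product identity to each summand against $Q^{(d)} \otimes \dotsi \otimes Q^{(1)}$ (noting that all factors in each slot are $n_m \times n_m$, so the shape hypothesis holds), obtaining
$$
\sum_{k=1}^d (B_k^{(d)} \odot Q^{(d)}) \otimes \dotsi \otimes (B_k^{(1)} \odot Q^{(1)}).
$$
It then remains to simplify each factor: for $k=m$ one has $B_k^{(m)} \odot Q^{(m)} = C^{(m)} \odot Q^{(m)}$, and for $k \neq m$ one uses $J_{n_m} \odot Q^{(m)} = Q^{(m)}$, since Hadamard multiplication by the all-ones matrix is the identity. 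This is precisely the definition of $A_k^{(m)}$, and the claimed formula follows. I expect the only genuine bookkeeping to lie in the first step—keeping the positions of the $C^{(k)}$ and $J$ factors straight across all $d$ slots in the induction—whereas the algebraic heart, the mixed-product identity, is entrywise immediate.
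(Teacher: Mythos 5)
Your proof is correct, and it takes a genuinely different (more modular) route than the paper's. The paper proves Lemma \ref{lem2} by direct induction on $d$ applied to the full identity: the base case $d=2$ is an explicit block-matrix computation in which the all-ones blocks are written out and the Hadamard products evaluated block by block, and the inductive step splits off the $d$-th factor and effectively re-runs the two-factor computation at each level. You instead isolate two standalone ingredients: (i) the expansion $C^{(d)} \oplus \dotsi \oplus C^{(1)} = \sum_{k=1}^d B_k^{(d)} \otimes \dotsi \otimes B_k^{(1)}$ with $C^{(k)}$ in slot $k$ and all-ones matrices elsewhere, which is where the induction lives and where it is pure bookkeeping (your factorization $J_{\tilde n} = J_{n_{d-1}} \otimes \dotsi \otimes J_{n_1}$ is valid, and prepending $J_{n_d}$ to the $(d-1)$-term expansion is exactly right); and (ii) the mixed Hadamard--Kronecker identity $(A_1 \otimes \dotsi \otimes A_d) \odot (B_1 \otimes \dotsi \otimes B_d) = (A_1 \odot B_1) \otimes \dotsi \otimes (A_d \odot B_d)$, which you correctly verify in one line with multi-indices. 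The lemma then follows by distributing $\odot$ over the finite sum and absorbing $J_{n_m} \odot Q^{(m)} = Q^{(m)}$, precisely as you say. What each approach buys: the paper's version is self-contained at the block level and structurally parallels its proof of Lemma \ref{lem1}, but it re-derives the two-factor mixed-product fact inside the induction; your version extracts that fact once, in full generality, so the induction becomes trivial and the same two ingredients would also streamline Lemma \ref{lem1} (entrywise exponentiation turns your expansion into a Hadamard product of Kronecker factors, each with $\exp[C^{(k)}]$ in one slot and $J$'s elsewhere, which the mixed-product identity collapses to $\exp[C^{(d)}] \otimes \dotsi \otimes \exp[C^{(1)}]$). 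No gaps; the only hypothesis you need, conformable shapes in each slot, holds since every factor in slot $m$ is $n_m \times n_m$.
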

\begin{proof}
The case $d=1$ is trivial. Consider the case $d=2$. By the definition
of Kronecker sum, we have
$$
[C^{(2)} \oplus C^{(1)}] \odot [Q^{(2)} \otimes Q^{(1)}] =
[C^{(2)} \oplus J_1] \odot [Q^{(2)} \otimes Q^{(1)}] +
[J_2 \oplus C^{(1)}] \odot [Q^{(2)} \otimes Q^{(1)}] 
$$

$$
=
\left(\begin{array}{@{}c|c|c@{}}
  C_{11}^{(2)} J_1
  & \cdots & C_{1n_2}^{(2)} J_1\\
\hline
  \vdots &
  \ddots & \vdots\\
\hline
  C_{n_2 1}^{(2)} J_1
& \cdots & C_{n_2 n_2}^{(2)} J_1
\end{array}\right)
\odot
\left(\begin{array}{@{}c|c|c@{}}
  Q_{11}^{(2)} Q^{(1)}
  & \cdots & Q_{1n_2}^{(2)} Q^{(1)}\\
\hline
  \vdots &
  \ddots & \vdots\\
\hline
  Q_{n_2 1}^{(2)} Q^{(1)}
& \cdots & Q_{n_2 n_2}^{(2)} Q^{(1)}
\end{array}\right)
+
$$

$$
\left(\begin{array}{@{}c|c|c@{}}
  C^{(1)}
  & \cdots & C^{(1)}\\
\hline
  \vdots &
  \ddots & \vdots\\
\hline
 C^{(1)}
& \cdots & C^{(1)}
\end{array}\right)
\odot
\left(\begin{array}{@{}c|c|c@{}}
  Q_{11}^{(2)} Q^{(1)}
  & \cdots & Q_{1n_2}^{(2)} Q^{(1)}\\
\hline
  \vdots &
  \ddots & \vdots\\
\hline
  Q_{n_2 1}^{(2)} Q^{(1)}
& \cdots & Q_{n_2 n_2}^{(2)} Q^{(1)}
\end{array}\right)
=
$$

$$
=
\left(\begin{array}{@{}c|c|c@{}}
  C_{11}^{(2)} Q_{11}^{(2)} Q^{(1)}
  & \cdots & C_{1n_2}^{(2)} Q_{1n_2}^{(2)} Q^{(1)}\\
\hline
  \vdots &
  \ddots & \vdots\\
\hline
  C_{n_2 1}^{(2)} Q_{n_2 1}^{(2)} Q^{(1)}
& \cdots & C_{n_2 n_2}^{(2)} Q_{n_2 n_2}^{(2)} Q^{(1)}
\end{array}\right)
+
\left(\begin{array}{@{}c|c|c@{}}
  Q_{11}^{(2)} (C^{(1)} \odot Q^{(1)})
  & \cdots & Q_{1n_2}^{(2)} (C^{(1)} \odot Q^{(1)})\\
\hline
  \vdots &
  \ddots & \vdots\\
\hline
  Q_{n_2 1}^{(2)} (C^{(1)} \odot Q^{(1)})
& \cdots & Q_{n_2 n_2}^{(2)} (C^{(1)} \odot Q^{(1)})
\end{array}\right)
=
$$

$$
=[C^{(2)} \odot Q^{(2)}] \otimes Q^{(1)} + Q^{(2)} \otimes [C^{(1)} \odot Q^{(1)}].
$$
Now assume that it is true for $d-1$: 
$$
[C^{(d-1)} \oplus \dotsi \oplus C^{(1)}] \odot [Q^{(d-1)} \otimes
\dotsi \otimes Q^{(1)}] =
\sum_{k=1}^{d-1} A_k^{(d-1)} \otimes
\dotsi \otimes A_k^{(1)}.
$$
Then
$$
[C^{(d)} \oplus \dotsi \oplus C^{(1)}] \odot [Q^{(d)} \otimes
\dotsi \otimes Q^{(1)}] =
$$

$$
[C^{(d)} \odot Q^{(d)}] \otimes [Q^{(d-1)} \otimes \dotsi \otimes Q^{(1)}]
+ Q^{(d)} \otimes 
\left( [C^{(d-1)} \oplus \dotsi \oplus C^{(1)}] \odot [Q^{(d-1)} \otimes
\dotsi \otimes Q^{(1)}] \right) =
$$

$$
[C^{(d)} \odot Q^{(d)}] \otimes Q^{(d-1)} \otimes \dotsi \otimes
Q^{(1)} 
+ 
Q^{(d)} \otimes  \sum_{k=1}^{d-1} A_k^{(d-1)} \otimes
\dotsi \otimes A_k^{(1)} =
$$

$$
A_d^{(d)} \otimes Q^{(d-1)} \otimes \dotsi \otimes
Q^{(1)} 
+ 
\sum_{k=1}^{d-1} Q^{(d)} \otimes A_k^{(d-1)} \otimes
\dotsi \otimes A_k^{(1)} =
$$

$$
=\sum_{k=1}^d A_k^{(d)} \otimes
\dotsi \otimes A_k^{(1)},
$$
noting that $A_d^{(d)} = C^{(d)} \odot Q^{(d)}$, and $A_d^{(k)} =
Q^{(k)}$ and $A_k^{(d)} = Q^{(d)}$ for $k=1, \dotsc, d-1$.

\end{proof}

\bibliographystyle{plain}
\bibliography{refs}

\end{document}